\newtheorem{theorem}{Theorem}[section]
\newtheorem{proposition}[theorem]{Proposition}
\newtheorem{example}[theorem]{Example}
\newtheorem{remark}[theorem]{Remark}
\newenvironment{proof}{\smallskip\par{\sc Proof.}\enspace}%
 {{\unskip\nobreak\hfil\penalty50\hskip2em
          \hbox{}\nobreak\hfil{\rule[-1pt]{5pt}{10pt}}
          \parfillskip=0pt\finalhyphendemerits=0
          \par\medskip}} 
\def\section{\@startsection {section}{1}{\z@}{3.25ex plus 1ex minus
 .2ex}{1.5ex plus .2ex}{\large\bf}}
\def\subsection{\@startsection{subsection}{2}{\z@}{3.25ex plus 1ex minus
 .2ex}{1.5ex plus .2ex}{\normalsize\bf}}
\chardef\bslash=`\\ 
\def\R{\mathbb{R}}
\newcommand{\eval}[2][\right]{\relax
\ifx#1\right\relax \left.\fi#2#1\rvert}
\numberwithin{equation}{section}
\def\caution#1{\ifnum \madechanges=1 \affixmessage{#1}%
\else \relax \fi}
\def\affixmessage#1{\marginpar{{\footnotesize  \em #1} \openup
    -.3\baselineskip }}
\title{Stochastic perturbation of a cubic anharmonic oscillator}
\author{Enrico Bernardi\thanks{Dipartimento di Scienze Statistiche Paolo Fortunati, Università di Bologna,
Bologna, Italy. \textbf{e-mail}: enrico.bernardi@unibo.it} \and  Alberto Lanconelli\thanks{Dipartimento di Scienze Statistiche Paolo Fortunati, Università di Bologna,
Bologna, Italy. \textbf{e-mail}: alberto.lanconelli2@unibo.it}}
\date{\today}
\begin{document}

\maketitle

\numberwithin{equation}{section}

\bigskip

\begin{abstract}
We perturb with an additive Gaussian white noise the Hamiltonian system associated to a cubic anharmonic oscillator. The stochastic system is assumed to start from initial conditions that guarantee the existence of a periodic solution for the unperturbed equation. We write a formal expansion in powers of the diffusion parameter for the candidate solution and analyze the probabilistic properties of the sequence of the coefficients. It turns out that such coefficients are the unique strong solutions of stochastic perturbations of the famous Lam\'e's equation. We obtain explicit solutions in terms of Jacobi elliptic functions and prove a lower bound for the probability that an approximated version of the solution of the stochastic system stay close to the solution of the deterministic problem. Conditions for the convergence of the expansion are also provided.    
\end{abstract}

Key words and phrases: cubic anharmonic oscillator, stochastic differential equations, Lam\'e's equation, Jacobi elliptic functions \\

AMS 2000 classification: 60H10, 60H25, 37H10

\allowdisplaybreaks

\section{Introduction}\label{intro}

We investigate the second order stochastic differential equation
\begin{eqnarray}\label{SDE}
\ddot{x}(t)=x(t)^2-\mathcal{B}+\sigma\dot{B}(t),\quad x(0)=y\quad \dot{x}(0)=\eta
\end{eqnarray} 
where $\sigma$ is a positive constant, $\mathcal{B} \in \R$ and $\{B(t)\}_{t\geq 0}$ is a one dimensional standard Brownian motion defined on the probability space $(\Omega,\mathcal{F},\mathbb{P})$ which is assumed to fulfill the usual completeness requirement. The rigorous formulation of equation (\ref{SDE}) is achieved by considering the  It\^ o-type stochastic Hamiltonian system
\begin{equation}\label{eqn:sys}
\begin{cases}
dx(t) = \xi(t) dt, & x(0)=y\\
d\xi(t)= \left(x^2(t) - \mathcal{B}\right)dt + \sigma dB(t), & \xi(0)=\eta.
\end{cases}
\end{equation}
This very simple model comes essentially from  the Hamiltonian
\begin{eqnarray}\label{hamiltonian}
H(x,\xi) = \xi^{2}/2 - x^{3}/3+\mathcal{B}x,
\end{eqnarray}
of a \emph{cubic anharmonic oscillator} perturbed by a Brownian noise. We may assume the initial data $(y,\eta) $ to be deterministic. 


The function (\ref{hamiltonian}) belongs to a class of Hamiltonians very thoroughly
studied, see e.g. Delabaere and Trinh \cite{DT},  Ferreira and Sesma \cite{FS} and the references therein, providing examples of quantum systems of interest in quantum field theory and in general theoretical
physics, especially in the $ \mathcal{PT}$-symmetric cases when
$\mathcal{B}  $ may be complex. Perhaps slightly less known is the fact that (\ref{hamiltonian}) also arises
as the major ingredient in a special canonical form in the theory of hyperbolic operators with
double characteristics of non-effectively hyperbolic type according to
H\"ormander's classification \cite{HH}, generating  at the same time a number of
problems on the regularity of the corresponding solutions and on the
behavior of the solutions of the associated Hamilton equations. We refer to Nishitani \cite{N1}, Bernardi and Nishitani \cite{BN1}, \cite{BN2} for a complete analysis of this issue: here a symplectically
invariant condition equivalent to the presence in (\ref{hamiltonian})
of the term $ -x^{3}/3 $ drives both the Gevrey classes to which the
solutions of the associated PDEs belong and determines the geometric behavior
of the simple bicharacteristics close to the double manifold. Since
the deterministic picture for the related dynamical system appears on
the whole to
be generally well understood, it seems only natural to start investigating
the stochastic system (\ref{eqn:sys}) as one of the simplest non
trivial cases  where the interaction between the white noise and the
unstable periodic solutions of the deterministic problem when $ \sigma
= 0$ in (\ref{eqn:sys}) can be seen directly.

The system (\ref{eqn:sys}) is characterized by a drift vector which is linear in the first component and quadratic in the second one and a degenerate diffusion matrix as the first equation is not perturbed by a Brownian term. The local lipschitzianity of the drift entails existence of strong solutions up to a possible almost surely finite stopping time at which the solution explodes. In addition, there are at least two distinguishing features of the system (\ref{eqn:sys}) that prevent the use of standard techniques in the analysis of existence and uniqueness of weak/strong solutions. First of all, due to the superlinear growth of the drift coefficient we are not allowed to employ the Girsanov theorem to construct weak solutions; this is one of the key tools for the investigation of almost sure properties of the solution (see Markus and Weerasinghe \cite{Markus 1}, \cite{Markus 2}). Secondly, the Hamiltonian (\ref{hamiltonian}) is not lower bounded and therefore classical methods based on the positivity of the energy have to be excluded (see Albeverio et al. \cite{Albeverio}, \cite{Albeverio 2}). \\

Our approach is based on a formal expansion in powers of $\sigma$ for
the candidate solution $\{x(t)\}_{t\geq 0}$ which permits a
qualitative study of each single coefficient of the power series. For
a general survey of how to establish the validity of the asymptotic expansion in powers
of $ \sigma $ see e.g. Gardiner \cite{Gard} page 182. To be more specific, let
\begin{eqnarray}\label{series}
x(t):=\sum_{n\geq 0}\sigma^n x_n(t),\quad t\geq 0.
\end{eqnarray}
A formal substitution of this expression into equation (\ref{SDE}) results in an equality between two power series. If we impose the coefficients of the corresponding powers of $\sigma$ to be equal, we end up with a sequence of nested random/stochastic Cauchy problems for the sequence of functions $\{x_n(t)\}_{t\geq 0, n\geq 0}$ . In fact, via a direct verification one gets that $\{x_0(t)\}_{t\geq 0}$ is associated with the deterministic equation
\begin{eqnarray}\label{x_0}
\ddot{x}_0(t)=x_0(t)^2-\mathcal{B},\quad x_0(0)=y\quad \dot{x}_0(0)=\eta.
\end{eqnarray} 
The function $\{x_1(t)\}_{t\geq 0}$ is linked to the linear stochastic differential equation
\begin{eqnarray}\label{x_1}
\ddot{x}_1(t)=2x_0(t)x_1(t)+\dot{B}(t),\quad x_1(0)=0\quad \dot{x}_1(0)=0
\end{eqnarray} 
where $\{x_0(t)\}_{t\geq 0}$ solves (\ref{x_0}). Equation (\ref{x_1}) can
be interpreted as a \emph{stochastic Lam\'e's equation}, see e.g. Arscott \cite{A}, Arscott and Khabaza
\cite{AK}, Volker \cite{Volker} and the website \cite{SL} for the deterministic case. We will see later in fact that, once $ x_{0}(t) $ is solved in (\ref{x_0}), (\ref{x_1}) can be written as 
\begin{equation}
\label{lm1}
\ddot{x}_1(t) + (h - \nu(\nu+1)k^{2}\text{sn}^{2}(t,k))x_1(t) =\dot{B}(t)~,
\end{equation}
where $ \text{sn}(t,k) $ is the Jacobi elliptic function with modulus
$ k $ and $ h,\nu $ are suitable constants depending on $ \mathcal{B}
$ essentially.\\
For $n\geq 2$ the function $\{x_n(t)\}_{t\geq 0}$ solves the random differential equation
\begin{eqnarray}\label{x_n}
\ddot{x}_n(t)=2x_0(t)x_n(t)+\sum_{j=1}^{n-1}x_j(t)x_{n-j}(t),\quad x_n(0)=0\quad \dot{x}_n(0)=0.
\end{eqnarray} 
We note that also in this case the equation to be solved is linear,  the function $\{x_0(t)\}_{t\geq 0}$ solves (\ref{x_0}) and the functions involved in the sum are the coefficients of lower order terms (with respect to the unknown) from the expansion (\ref{series}).   

\begin{remark}
The techniques employed in this paper carry over the case of a multiplicative noise term as well, that means we are able to treat with only minor and straightforward modifications also the second order stochastic differential equation
\begin{eqnarray}\label{SDE general}
\ddot{x}(t)=x(t)^2-\mathcal{B}+\sigma x(t)\dot{B}(t),\quad x(0)=y\quad \dot{x}(0)=\eta
\end{eqnarray} 
where now the white noise is multiplied by the unknown $x(t)$. In fact, proceeding as explained above with the formal substitution of the power series (\ref{series}) in the equation (\ref{SDE general}) one immediately finds that  $\{x_0(t)\}_{t\geq 0}$ is again associated with the deterministic equation (\ref{x_0}) while $\{x_1(t)\}_{t\geq 0}$ is now linked to the linear stochastic differential equation
\begin{eqnarray}\label{x_1 general}
\ddot{x}_1(t)=2x_0(t)x_1(t)+x_0(t)\dot{B}(t),\quad x_1(0)=0\quad \dot{x}_1(0)=0.
\end{eqnarray} 
We observe that the noise term in (\ref{x_1 general}) is additive and corresponds to a Brownian motion composed with a deterministic time change. Therefore, equation (\ref{x_1 general}) is a minor modification of equation (\ref{x_1}). Moreover, for $n\geq 2$ the function $\{x_n(t)\}_{t\geq 0}$ now solves the random differential equation
\begin{eqnarray}\label{x_n general}
\ddot{x}_n(t)=2x_0(t)x_n(t)+\sum_{j=1}^{n-1}x_j(t)x_{n-j}(t)+x_{n-1}(t)\dot{B}(t),\quad x_n(0)=0\quad \dot{x}_n(0)=0.
\end{eqnarray} 
The term $x_{n-1}(t)\dot{B}(t)$ in (\ref{x_n general}), which is not present in (\ref{x_n}) can be defined as a pathwise integral since the function $t\mapsto x_1(t)$ is almost surely continuously differentiable; as for the solution of (\ref{x_n general}), the new term can be absorbed by the sum on the right hand side and the analysis follows from the one employed for (\ref{x_n}).
\end{remark}

Our strategy will be to first solve explicitely (\ref{x_0}). Then (\ref{x_1}) will be seen to belong to a class of linear
stochastic Lam\'e's equations. For that we will first write down two linearly independent solutions of the homogenous equation and briefly recall the known Floquet-Lyapunouv results. Then, we will study the white-noise oscillator, proving a number of
results on the oscillatory behavior of the solution process. Finally, we will present some results on the global development $
\sum_{n\geq 0}\sigma^{n}x_n(t) $ for some special classes of noise.\\

The paper is organized as follows: In Section 2 we analyze the coefficients of the power series (\ref{series}) as solutions to certain stochastic/random differential equations; we will provide explicit solutions and describe their fundamental probabilistic properties. Then, in Section 3 we present several lower bounds in terms of the (explicit) two independent solutions of the Lam\'e's equation for both the coefficients of series (\ref{series}) and its truncated version. Finally, Section 4 provides a sufficient condition on the driving noise ensuring the almost sure uniform convergence of the series (\ref{series}) on a compact time interval whose length depend on the diffusion coefficient $\sigma$. 

\section{Analysis of the coefficients of the series (\ref{series})}

In this section we analyze the sequence of functions $\{x_n(t)\}_{n\geq 0, t\geq 0}$ appearing in the formal expansion (\ref{series}) as explained in the Introduction.  

\subsection{The equation for $x_0$: deterministic case }\label{deterministic section}

We begin with the study of the deterministic system
\begin{eqnarray*}
\ddot{x}_0(t)=x_0(t)^2-\mathcal{B},\quad x_0(0)=y\quad \dot{x}_0(0)=\eta
\end{eqnarray*} 
which is equivalent to 
\begin{equation}\label{eq:25 bis}
\begin{cases}
\dot{x}_0(t) = \xi_0(t), & x_0(0)=y\\
\dot{\xi}_0(t)= x_0^2(t) - \mathcal{B}, & \xi_0(0)=\eta.
\end{cases}
\end{equation}
The constant $\mathcal{B}$ and the initial data $(y,\eta) $ will be chosen in such a way that the third order
polynomial $x^{3}/3 - \mathcal{B} x + H(y,\eta) $ has three real roots. This implies the existence for (\ref{eq:25 bis}) of a \emph{periodic solution}, whose behavior under the stochastic perturbation is our
concern here. 

\begin{figure}[ht]
  \centering
  \includegraphics[scale=0.3]{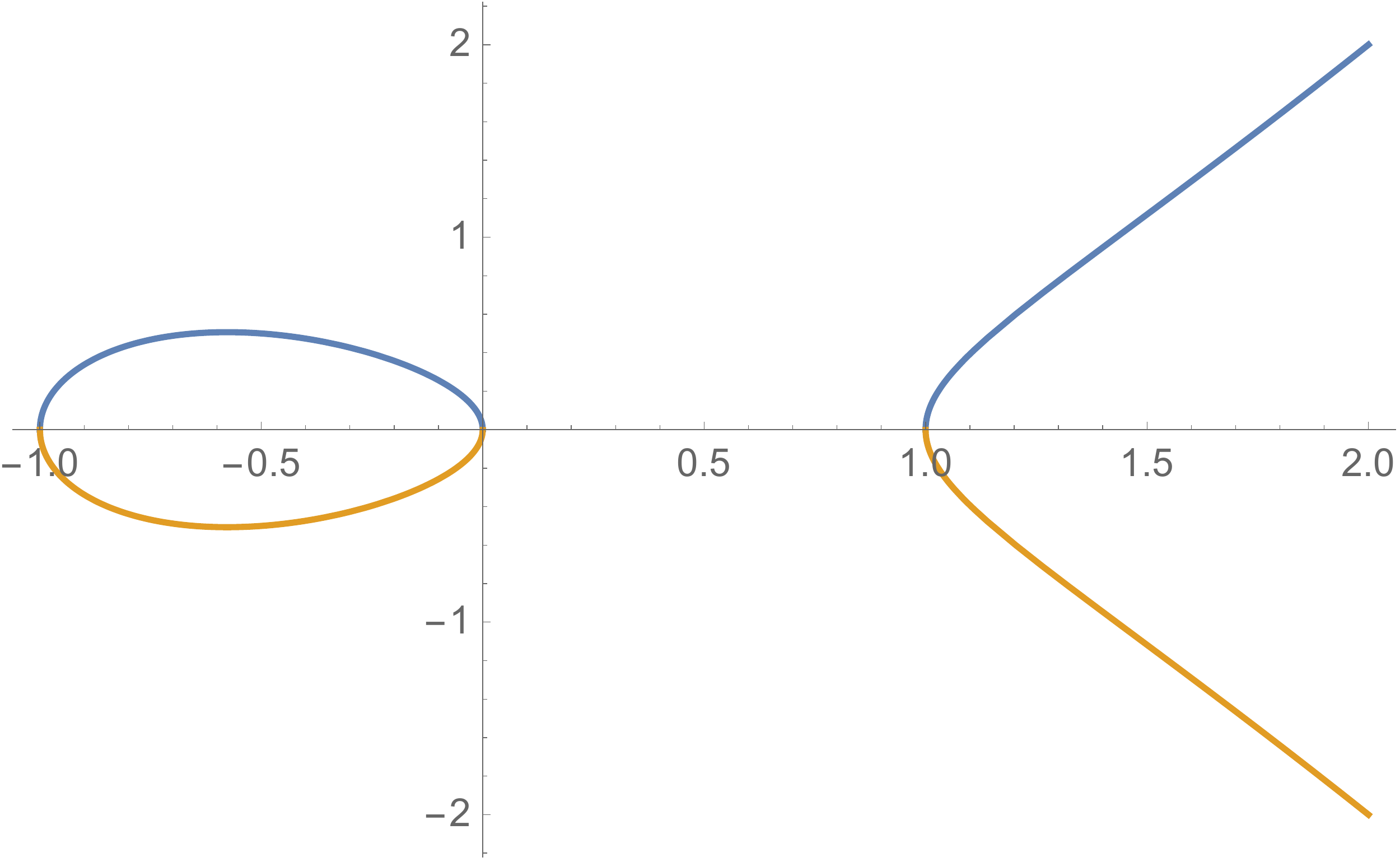}
  \caption{Graph of Hamiltonian with $ c=-1,a=1 $}
\end{figure}

Then, slightly changing our notations, we start directly with the three real roots of the polynomial and we
denote them by $c$, $-a-c$, $a $ with $ c < 0 < a $; imposing
\begin{eqnarray*}
x^{3}/3 - \mathcal{B} x + H(y,\eta) =(x-a)(x+a+c)(x-c)/3
\end{eqnarray*}
we get
\begin{eqnarray*}
\mathcal{B}=(a^2+c^2+ac)/3\quad\mbox{ and }\quad H(y,\eta)=ac(a+c)/3.
\end{eqnarray*}
Therefore, the Hamiltonian system we are going to analyze is
\begin{equation}\label{eqn:sys-det}
\begin{cases}
\dot{x}_{0}(t) = \xi_{0}(t),  &  x_{0}(0) =y\\
\dot{\xi}_{0}(t)= x_{0}^2(t) - (a^{2} + c^{2} +ac)/3, & \xi_{0}(0) = \eta,
\end{cases}
\end{equation}
with  $y\in [c, -a -c] $. We assume without loss of generality that $2c + a < 0$, which entails $c <-a -c< a$. Now, rewriting the conservation of energy
\begin{eqnarray*}
H(y,\eta)=\dot{x}_0(t)^2/2-x_0(t)^3/3+\mathcal{B}x_0(t)
\end{eqnarray*}
as
\begin{eqnarray*}
\dot{x}_0(t)^2/2=x_0(t)^3/3-\mathcal{B}x_0(t)+H(y,\eta)
\end{eqnarray*}
we get
\begin{eqnarray*}
\dot{x}_{0}(t)^{2}/2 =(x_0(t)-a)(x_0(t)+a+c)(x_0(t)-c)/3,\quad x_{0}(0) = y
\end{eqnarray*}
which in turn implies
\begin{eqnarray}\label{elliptic}
\int_{y}^{x_{0}(t)}\frac{dv}{\sqrt{(v-a)(v+a+c)(v-c)}} = \sqrt{2/3} t.
\end{eqnarray}
The integral in equation (\ref{elliptic}) is related to elliptic integrals. It is in fact known (see for instance the book by Gradshteyn and Ryzhik \cite{GR}) that
\begin{equation}\label{eq:3}
\int_{c}^{u}\frac{dv}{\sqrt{(v-a)(v-b)(v-c)}} = \frac{2}{\sqrt{a -c}}\int_{0}^{\gamma}\frac{d\alpha}{\sqrt{1 - q^{2}\sin^{2}\alpha}}
\end{equation}
whenever $c<u\leq b<a$. Here $\gamma$ and $q$ are defined by the formulas
\begin{eqnarray*}
\gamma=: \arcsin\sqrt{\frac{u-c}{b-c}}\quad\mbox{ and }\quad q : = \sqrt{\frac{b-c}{a-c}}.
\end{eqnarray*} 
In the sequel we set  
\begin{eqnarray}\label{F}
\displaystyle F(\gamma,q): =\int_{0}^{\gamma}\frac{d\alpha}{\sqrt{1 - q^{2}\sin^{2}\alpha}}
\end{eqnarray}
for the so-called elliptic integral of the first kind. We also recall that 
\begin{eqnarray}\label{sn}
\displaystyle u = F(\gamma,q) \quad\mbox{ is equivalent to }\quad\displaystyle \text{sn}(u,q) = \sin \gamma,
\end{eqnarray}
where $ \text{sn}(u,q)$ denotes the \emph{Jacobi elliptic sine}
function with modulus $ q $. While referring to Gradshteyn and Ryzhik
\cite{GR} or the website \cite{SF} for a complete exposition of the Jacobi elliptic functions, we sum up in the Appendix (\ref{appendix}) some of the essential features we will be using in the following. Therefore, comparing (\ref{elliptic}) with (\ref{eq:3}) (where we set $b=-a-c$) we get
\begin{equation*}\label{sol-1}
\sqrt{\frac{2}{3}}~t = \frac{2}{\sqrt{a-c}}F\left(\arcsin\sqrt{\frac{c -x_{0}(t)}{2c +a}},q\right) - \frac{2}{\sqrt{a-c}}F\left(\arcsin\sqrt{\frac{c -y}{2c +a}},q\right),
\end{equation*}
with $ q = \sqrt{\frac{2c+a}{c-a}} $. It is then easy to see that the last identity combined with (\ref{sn}) gives
\begin{equation}\label{eq:6-1}
x_{0}(t) = c - (a+2c)\text{sn}^{2}\left(\sqrt{\frac{a-c}{6}}t + i_{y},q\right),
\end{equation}
where we denote
\begin{eqnarray}\label{i_y}
\displaystyle i_{y}:=F\left(\arcsin\sqrt{\frac{c -y}{2c +a}},q\right).
\end{eqnarray}
We have therefore proved the following.

\begin{theorem}
The unique solution $\{x_0(t)\}_{t\geq 0}$ of the deterministic Hamiltonian system (\ref{eqn:sys-det}) where 
\begin{eqnarray*}
c<0<a,\quad 2c+a<0\quad\mbox{and}\quad y\in [c,-a-c]
\end{eqnarray*}
is explicitly given by formula (\ref{eq:6-1}) with $ q = \sqrt{\frac{2c+a}{c-a}} $ and $i_y$ defined by (\ref{i_y}).  
\end{theorem}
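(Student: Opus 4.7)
The plan is to argue in two stages: first establish existence and uniqueness on all of $[0,\infty)$, then derive the closed form by reducing the ODE to a standard elliptic integral and inverting it.

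For existence and uniqueness, I would observe that the vector field in (\ref{eqn:sys-det}) is a polynomial in $(x_0,\xi_0)$, hence locally Lipschitz, so the Cauchy--Lipschitz theorem yields a unique maximal solution. Global existence is not automatic because of the quadratic nonlinearity, but the hypothesis $y \in [c,-a-c]$ together with $H(y,\eta) = ac(a+c)/3$ places the initial point on the compact oval of the level set $\{H=H(y,\eta)\}$ encircling the center, and conservation of energy keeps the trajectory on this oval for all time. The oval being compact, the solution cannot explode, so it is defined globally.

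For the explicit formula I would follow the derivation sketched before the statement, but frame it as a rigorous computation. From energy conservation one gets
\begin{equation*}
\dot{x}_0(t)^2 = \tfrac{2}{3}(x_0(t)-a)(x_0(t)+a+c)(x_0(t)-c),
\end{equation*}
and since $x_0(t) \in [c,-a-c]$ the right-hand side is nonnegative. On a time interval where $\dot{x}_0$ has a fixed sign one can separate variables, obtaining an elliptic integral which by (\ref{eq:3}) and the substitution $\gamma = \arcsin\sqrt{(c-x_0)/(2c+a)}$ reads
\begin{equation*}
\sqrt{\tfrac{2}{3}}\,t = \tfrac{2}{\sqrt{a-c}}\bigl[F(\gamma(t),q) - F(\gamma(0),q)\bigr],
\end{equation*}
with $q=\sqrt{(2c+a)/(c-a)}$. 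Inverting by (\ref{sn}) gives formula (\ref{eq:6-1}).

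The cleanest way to finish, avoiding the subtlety of which branch of the square root to take at times when $\dot{x}_0$ vanishes, is to switch to verification: take (\ref{eq:6-1}) as an ansatz and check directly, using the standard identity $\tfrac{d}{du}\text{sn}(u,q)=\text{cn}(u,q)\,\text{dn}(u,q)$ and $\text{cn}^2 + \text{sn}^2 = 1$, $\text{dn}^2 + q^2\text{sn}^2 = 1$ recalled in the appendix, that the function defined by (\ref{eq:6-1}) satisfies $\ddot{x}_0 = x_0^2 - (a^2+c^2+ac)/3$ and $x_0(0)=y$. Uniqueness from Cauchy--Lipschitz then forces it to be the solution. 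The main obstacle I anticipate is matching the sign of $\dot{x}_0(0) = \eta$ with the choice of $i_y$: the formula (\ref{i_y}) only determines $i_y$ modulo the symmetries of $\text{sn}^2$, and one must check that the branch of $\arcsin$ (or a suitable shift of $i_y$) is selected consistently with the sign of $\eta$ dictated by $\eta^2 = 2(y-a)(y+a+c)(y-c)/3$. Once this sign convention is fixed, the verification is a straightforward application of the elliptic identities.
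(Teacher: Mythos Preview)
Your proposal is correct and follows essentially the same route as the paper: conservation of energy gives $\dot{x}_0^2 = \tfrac{2}{3}(x_0-a)(x_0+a+c)(x_0-c)$, separation of variables yields the elliptic integral (\ref{elliptic}), and the reduction (\ref{eq:3}) together with the inversion (\ref{sn}) produces (\ref{eq:6-1}). Your additions---the explicit global-existence argument via compactness of the oval and the verification step to handle the sign ambiguity at turning points---are refinements the paper omits, but the underlying derivation is identical.
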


\begin{figure}[ht]
  \centering
  \includegraphics[scale=0.3]{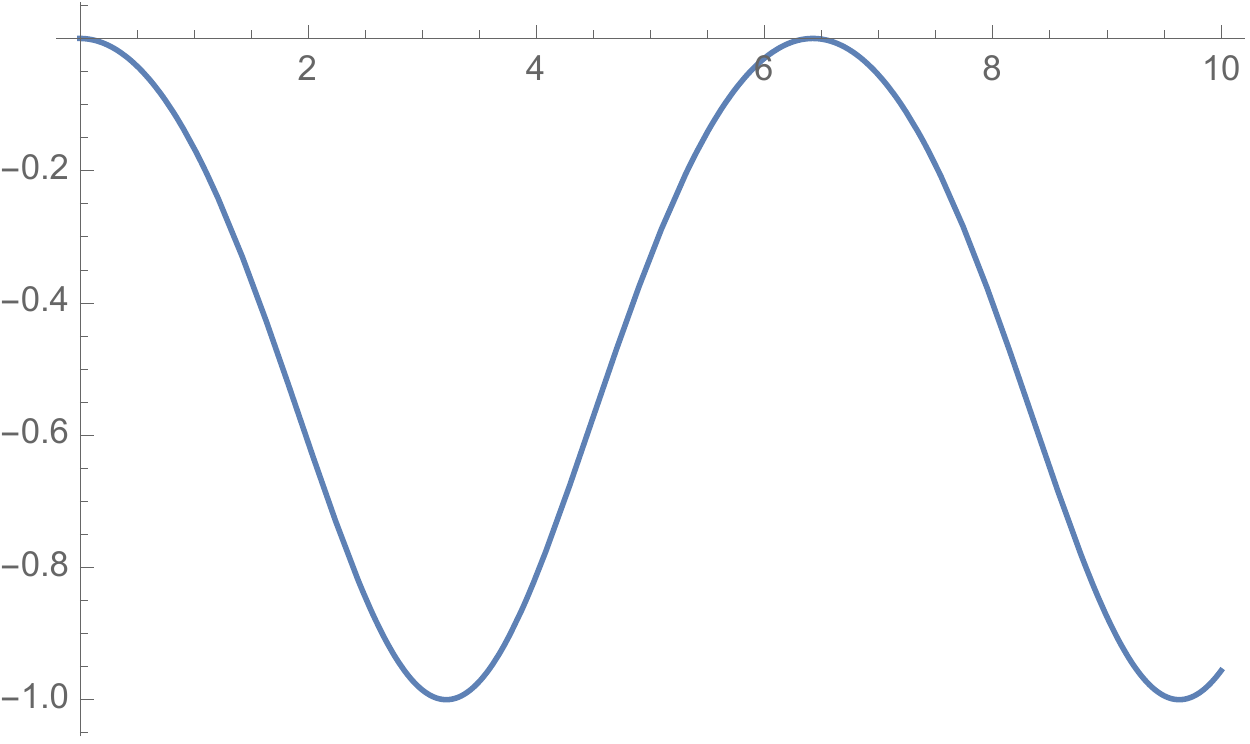}
  \caption{Graph of (\ref{eq:6-1}) with $ c=-1,a=1 $}
\end{figure}

Here the oval part in Figure 1, parametrised by $ (x_{0},\dot{x}_{0}) $:
\begin{figure}[ht]
  \centering
  \includegraphics[scale=0.3]{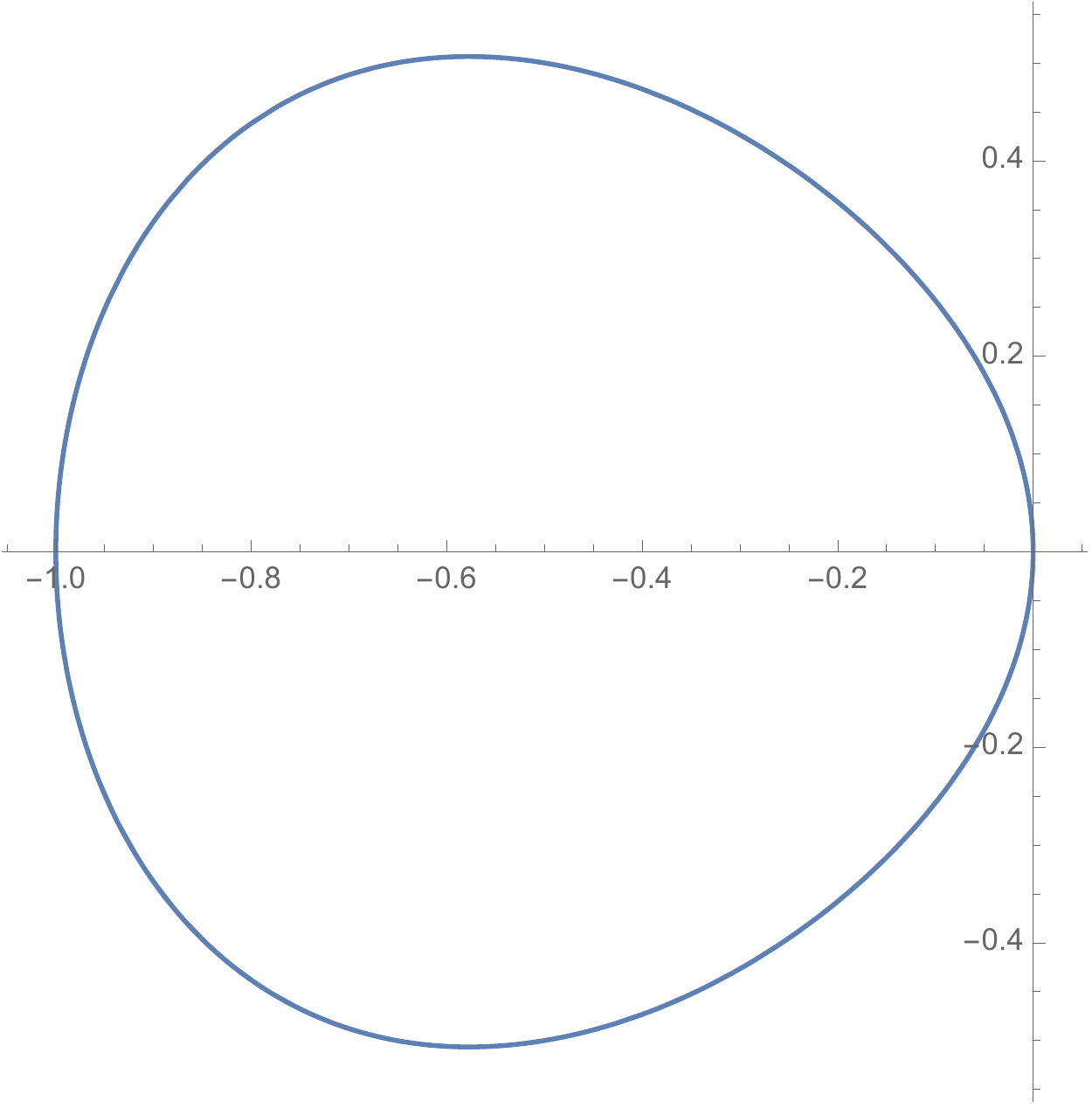}
  \caption{Graph of $ H(y,\eta) =0 $}
\end{figure}

\subsection{The equation for $x_1$: a stochastic Lam\'e's equation}

We now want to solve
\begin{eqnarray}\label{x_1 bis}
\ddot{x}_1(t)=2x_0(t)x_1(t)+\dot{B}(t),\quad x_1(0)=0\quad \dot{x}_1(0)=0
\end{eqnarray} 
which is equivalent to the system of stochastic differential equations
\begin{equation*}\label{x_1 system}
\begin{cases}
dx_1(t) = \xi_1(t)dt, & x_1(0)=0\\
d\xi_1(t)= 2x_0(t)x_1(t)dt+dB(t), & \xi_1(0)=0
\end{cases}
\end{equation*}
with $ x_{0}(t) $ given by (\ref{eq:6-1}). We first investigate the homogeneous equation 
\begin{equation}\label{eq:7}
\ddot{w}(t) - 2x_{0}(t)w(t) =0
\end{equation}
and search for two independent solutions. We observe that according to formula (\ref{eq:6-1}) equation (\ref{eq:7}) can be rewritten as
\begin{equation*}
\ddot{w}(t) - 2\left(c - (a+2c)\text{sn}^{2}\left(\sqrt{\frac{a-c}{6}}t + i_{y},q\right)\right)w(t) =0.
\end{equation*}
It is then equivalent to study equation
\begin{eqnarray}\label{Lame w}
\ddot{u}(t) - \frac{12}{a-c}\left(c - (a+2c)\text{sn}^{2}\left(t,q\right)\right)u(t) =0
\end{eqnarray}
and set
\begin{eqnarray*}
w(t):=u\left(\sqrt{\frac{a-c}{6}}t + i_{y}\right),\quad t\geq 0.
\end{eqnarray*}
The first solution we are going to find is related to the Lam\'e's equation which we now briefly recall. Lam\'e's equation is usually given as
\begin{equation}\label{eq:9}
\ddot{u}(t)  + (h - \nu(\nu+1)q^{2}\text{sn}^{2}(t,q))u(t)=0.
\end{equation}
For fixed $ q $ and $ \nu $ an eigenvalue of (\ref{eq:9}) is a value of $ h $ for which (\ref{eq:9}) has a nontrivial odd or even solution with period $ 2K $ or $ 4K $ where $ K =K(q) = F(\frac{\pi}{2},q)$ (recall equality (\ref{F})). Comparing (\ref{eq:9}) with (\ref{Lame w}) we see that 
\begin{eqnarray}\label{h}
h = \frac{12c}{c-a}= 4 + 4q^{2}
\end{eqnarray}
due to the equality $ q = \sqrt{\frac{2c+a}{c-a}}$. Moreover, since
$t\mapsto\text{sn}(t,q) $ is periodic of period $ 4K $ and $t\mapsto
\text{sn}^{2}(t,q) $ is periodic of period $ 2K$, we conclude that $h$
given in (\ref{h}) is an eigenvalue of (\ref{eq:9}) corresponding to
case (8) at pag. X
in Arscott and Khabaza \cite{AK} and hence
\begin{eqnarray}\label{w_1}
\displaystyle u_{1}(t) = \text{sn}(t,q)\text{cn}(t,q) \text{dn}(t,q),\quad t\geq 0 
\end{eqnarray}
is the first solution of  (\ref{Lame w}) we are looking for. It is a special Lam\'e's polynomial of order three satisfying $ u_{1}(0) =0$ and $u_{1}'(0) =1 $.

\begin{figure}[ht]
  \centering
  \includegraphics[scale=0.6]{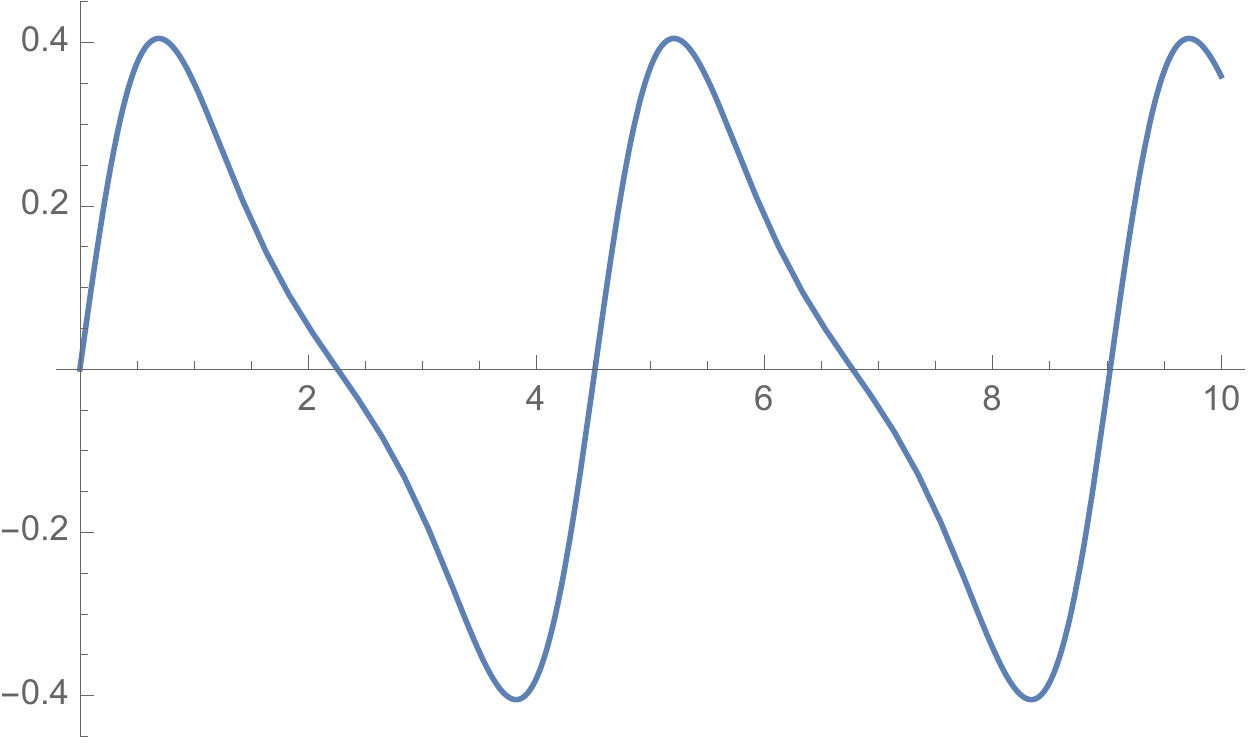}
  \caption{Graph of $u_1$ with $ q =2/\sqrt{5} $}
\end{figure}

We now need another independent solution. It is a very elementary fact that if $ u_{1} $ solves the equation $
\ddot{u}(t) + \alpha u(t) =0 $, then $ u_{2}(t): = u_{1}(t)\cdot \int^{t}\frac{ds}{u_{1}^{2}(s)}$
solves the same equation. Since 
\begin{equation*}\label{eq:10}
\int\frac{dt}{ \left[\text{sn}(t,q)
\text{cn}(t,q) \text{dn}(t,q)\right]^{2}} = \frac{C(t,q)}{D(t,q)}
\end{equation*}
with
\begin{eqnarray*}
C(t,q): &=& -\text{dn}(t,q)\biggl[ -1 + q^2 + \biggl(2 + q^2(-5 + (3-2q^2)q^2)\biggr)\text{cn}^{2}(t,q)\\
&&+2q^2\biggl(1 + (-1+q^2)q^2\biggr)\text{cn}^{4}(t,q)\\
&&+\biggl((2 -q^2)(-1+q^2)x + 2(q^4 - q^2 +1)\mathcal{E}(t,q)\biggr)\\
&&\times\text{sn}(t,q)\text{cn}(t,q) \text{dn}(t,q)\biggr]
\end{eqnarray*}
and
\begin{eqnarray*}
D(t,q) := (-1 + q^2)^2 \text{sn}(t,q)
\text{cn}(t,q) \text{dn}^{2}(t,q)
\end{eqnarray*}
we get that 
\begin{equation}\label{eq:13}
u_{2}(t) = \mathcal{C}(t,q)+\mathcal{D}(t,q)u_{1}(t,q)
\end{equation}
is a second independent solution of (\ref{Lame w}). Here
\begin{eqnarray*}
\mathcal{C}(t,q) &:=& \alpha_{0}(q) + \alpha_{1}(q)\text{cn}^{2}(t,q) + \alpha_{2}(q)\text{cn}^{4}(t,q)\\
\mathcal{D}(t,q) &:=& \beta_{0}(q)t + \beta_{1}(q)\mathcal{E}(t,q)
\end{eqnarray*}
and
\begin{eqnarray*}
\alpha_{0}(q) &:=& -1 + q^{2}\\
\alpha_{1}(q) &:=& -2q^{6} + 3q^{4}  -5q^{2} +2 \\
\alpha_{2}(q) &:=& 2q^{2}(q^{4} -q^{2} +1) \\
\beta_{0}(q) &:=& -q^{4} + 3q^{2} -2 \\
\beta_{1}(q) &:=& 2(q^{4} - q^{2} +1).
\end{eqnarray*}

\begin{figure}[ht]
  \centering
  \includegraphics[scale=0.6]{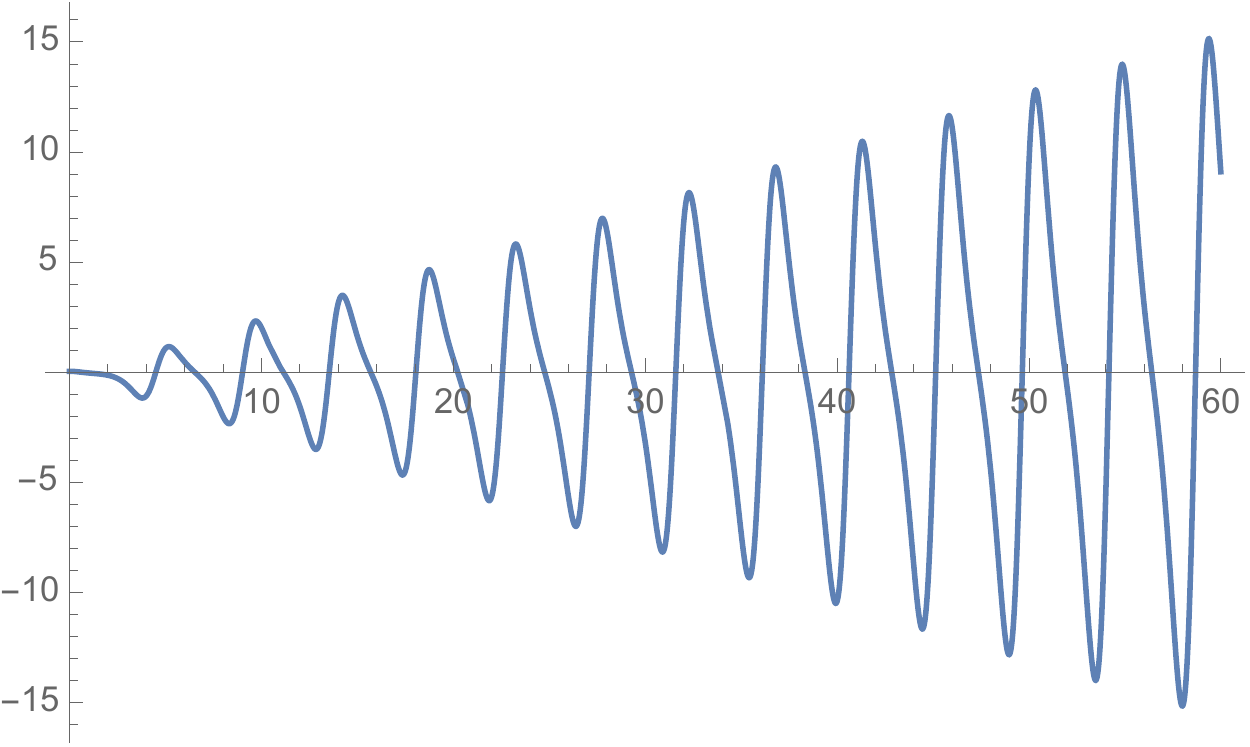}
  \caption{Graph of $u_{2}$ with $ q =2/\sqrt{5} $}
\end{figure}

The coefficient of $t$ in (\ref{eq:13}) is given by 
\begin{eqnarray*}
\mu(q) :=\beta_{0}(q) + \beta_{1}(q)\frac{E(q)}{K(q)}
\end{eqnarray*}
with $ E(q) $ denotes the complete elliptic integral of the second kind. This coefficient behaves like this when $ 0<q<1 $:

\begin{figure}[H]
  \centering
  \includegraphics[scale=0.6]{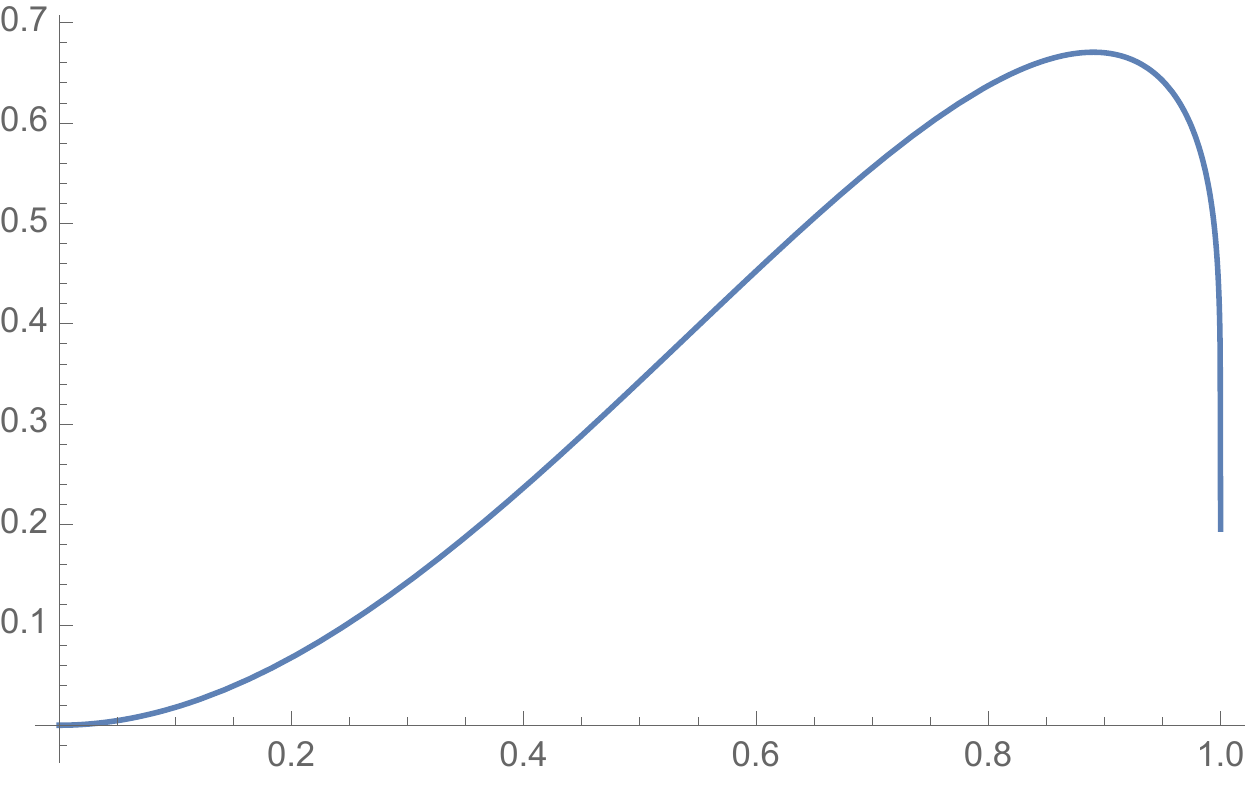}
  \caption{Graph of $\mu(q)$ with $ q \in (0,1) $}
\end{figure}

Going back to equation (\ref{eq:7}) we have found  two linearly independent solutions:
\begin{eqnarray*}
w_{1}(t) = u_{1}\left(\sqrt{\frac{a-c}{6}}t + i_{y}\right)\quad\mbox{ and }\quad w_{2}(t) = u_{2}\left(\sqrt{\frac{a-c}{6}}t + i_{y}\right).
\end{eqnarray*}
It is easy to verify that the Wronskian determinant of $(u_{1},u_{2})$ is $ -(1 - q^{2})^{2} \neq 0 $ and that of $
(w_{1},w_{2}) $ is $ -\sqrt{\frac{a-c}{6}}(1 - q^{2})^{2}$
Without loss of generality multiplying $ w_{1} $ and $w_{2}$ by
suitable constants we may assume that their  Wronskian determinant is $ 1 $.
Moreover, since $u_{1}$ is periodic of period $2K$ we get that $w_{1}$
is periodic of period $\displaystyle 2\sqrt{6/(a-c)}K $.  A simple
application of standard Floquet-Lyapunov results, see e.g Yakubovich and Starzhinskii \cite{YS}
page 97, tells us that (\ref{eq:9}) has one periodic solution (here $
w_{1} $) and it is unstable, due to a double eigenvalue in the
monodromy matrix. We omit the trivial details. 

\begin{theorem}\label{theorem x_1}
Equation (\ref{x_1 bis}) has a unique global strong solution adapted to the Brownian filtration $\{\mathcal{F}^B_t\}_{t\geq 0}$. The solution is a continuous Gaussian process which can be explicitly represented as 
\begin{eqnarray*}
x_1(t)&=&w_2(t)\int_0^tw_1(s)dB(s)-w_1(t)\int_0^tw_2(s)dB(s)
\end{eqnarray*}
or equivalently
\begin{eqnarray}\label{x_1 representation}
x_1(t)=\int_0^t\mathcal{K}(t,s)dB(s)\quad\mbox{ with }\quad\mathcal{K}(t,s):=w_2(t)w_1(s)-w_1(t)w_2(s).
\end{eqnarray}
\end{theorem}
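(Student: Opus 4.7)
My plan is to reduce (\ref{x_1 bis}) to a linear It\^o system in the pair $(x_1,\xi_1)$ with $\xi_1:=\dot x_1$:
\[
dx_1(t)=\xi_1(t)\,dt,\qquad d\xi_1(t)=2x_0(t)x_1(t)\,dt+dB(t),\qquad x_1(0)=\xi_1(0)=0.
\]
Since $x_0$ is the explicit bounded periodic function (\ref{eq:6-1}), the drift of this $2\times 2$ system has continuous, globally bounded entries and is therefore globally Lipschitz in the state variables on every $[0,T]$, while the diffusion coefficient is constant. The classical well-posedness theorem for linear SDEs then yields a unique strong solution, adapted to $\{\mathcal F^B_t\}_{t\geq 0}$ and defined for all $t\geq 0$; the boundedness of the coefficients in $t$ combined with the at-most-linear growth in state precludes explosion, so no localisation argument is needed. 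This disposes of existence, uniqueness and adaptedness in one step.

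For the explicit representation I would proceed by deterministic variation of constants, using as Green's function the kernel $\mathcal K(t,s)=w_2(t)w_1(s)-w_1(t)w_2(s)$ built from the two homogeneous solutions produced above and normalised so that $w_1\dot w_2-w_2\dot w_1\equiv 1$. Set $M_i(t):=\int_0^t w_i(s)\,dB(s)$ for $i=1,2$ and $\tilde x_1(t):=w_2(t)M_1(t)-w_1(t)M_2(t)$. Applying the product rule to each summand (no It\^o correction appears, since $w_1,w_2$ are deterministic and smooth) the two $dB$ contributions cancel exactly and I obtain
\[
d\tilde x_1(t)=[\dot w_2(t)M_1(t)-\dot w_1(t)M_2(t)]\,dt.
\]
A second application of the product rule to the right hand side produces a diffusion coefficient equal to the Wronskian, i.e.\ $1$, and a drift $\ddot w_2(t)M_1(t)-\ddot w_1(t)M_2(t)=2x_0(t)\tilde x_1(t)$ thanks to $\ddot w_i=2x_0 w_i$. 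Together with $\tilde x_1(0)=\dot{\tilde x}_1(0)=0$, this shows that $\tilde x_1$ solves the same Cauchy problem as $x_1$, and the already established uniqueness forces the two to coincide, yielding (\ref{x_1 representation}).

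The two remaining qualitative claims are then essentially free from the representation. For each fixed $t$, $x_1(t)$ is a Wiener integral of the deterministic continuous kernel $s\mapsto\mathcal K(t,s)$; more generally any finite linear combination $\sum_k\lambda_k x_1(t_k)$ is itself a Wiener integral of a deterministic function and is therefore a centred Gaussian random variable, which is the defining property of a Gaussian process. Path continuity follows either from the joint continuity of $\mathcal K$ combined with It\^o's isometry and a Kolmogorov moment criterion, or, more directly, from the standard pathwise continuity of solutions to well-posed linear SDEs with continuous coefficients. The main obstacle I anticipate is purely bookkeeping: one must verify that the product rule applied to $w_i(t)M_i(t)$ genuinely produces no It\^o correction (it does not, because $w_1,w_2$ are deterministic of bounded variation) and that the sign and normalisation conventions for the Wronskian match those fixed just before the statement; once these are pinned down the verification is short, and the rest of the argument is standard linear SDE theory.
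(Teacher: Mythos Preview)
Your proof is correct, but it follows a different route from the paper's own argument, and the comparison is worth noting.

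You first invoke the standard well-posedness theorem for linear SDEs with bounded continuous coefficients to obtain existence, uniqueness, adaptedness and non-explosion in one stroke, and then verify the explicit formula by a direct It\^o product-rule computation on $\tilde x_1(t)=w_2(t)M_1(t)-w_1(t)M_2(t)$, exploiting $\ddot w_i=2x_0w_i$ and the Wronskian normalisation to identify both drift and diffusion. The paper proceeds in the opposite order and with a different verification device: it starts from the candidate (\ref{x_1 representation}), integrates by parts to write $x_1(t)=-\int_0^t\partial_2\mathcal K(t,s)B(s)\,ds$ (from which $x_1(0)=\dot x_1(0)=0$ and path differentiability are read off), and then checks the equation \emph{weakly} by testing against the stochastic exponentials $\mathrm{Exp}(f)$, reducing the problem to the deterministic fact that $t\mapsto\int_0^t\mathcal K(t,s)f(s)\,ds$ solves $\ddot z=2x_0z+f$; totality of $\{\mathrm{Exp}(f)\}$ in $L^2(\Omega,\mathcal F^B,\mathbb P)$ then closes the argument. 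Your approach is more elementary and self-contained---it needs only the It\^o product rule and the classical linear SDE theorem---whereas the paper's exponential-vector argument, while slightly heavier, has the advantage of making the link with the deterministic variation-of-constants formula for a generic forcing $f$ completely explicit, which is what is reused verbatim in the proof of Theorem~\ref{theorem x_n} for the higher coefficients $x_n$.
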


\begin{proof}
It is clear from (\ref{x_1 representation}) and the basic properties of the Wiener integral  that $\{x_1(t)\}_{t\geq 0}$ is a continuous Gaussian process adapted to the Brownian filtration $\{\mathcal{F}^B_t\}_{t\geq 0}$. We have to verify that it solves equation (\ref{x_1 bis}) (uniqueness follows from the linearity of the equation). We first observe that since $\mathcal{K}(t,t)=0$ and $B(0)=0$ we obtain integrating by parts
\begin{eqnarray*}
x_1(t)&=&-\int_{0}^t\partial_2\mathcal{K}(t,s)B(s)ds
\end{eqnarray*}
(note that $w_1$ and $w_2$ are real analytic and that $\partial_i$ stands for differentiation with respect to the $i$-th variable). This gives
\begin{eqnarray*}
\dot{x}_1(t)=-\partial_2\mathcal{K}(t,t)B(t)-\int_{0}^t\partial_1\partial_2\mathcal{K}(t,s)B(s)ds
\end{eqnarray*}
and hence $x_1(0)=\dot{x}_1(0)=0$. Now, let $f\in C([0,+\infty[)\cap L^2([0,+\infty[)$ and consider the random element
\begin{eqnarray*}
\text{Exp}(f):=\exp\left\{\int_0^{+\infty}f(t)dB(t)-\frac{1}{2}\int_0^{+\infty}f^2(t)dt\right\}.
\end{eqnarray*}
Using It\^o's formula and isometry one sees that
\begin{eqnarray*}
\mathbb{E}\left[x_1(t)\text{Exp}(f)\right]=\int_0^t\mathcal{K}(t,s)f(s)ds.
\end{eqnarray*} 
A direct verification exploiting the fact that $w_1$ and $w_2$ are solutions of (\ref{eq:7}) with Wronskian equal to one shows that the function $t\mapsto \int_0^t\mathcal{K}(t,s)f(s)ds$ solves
\begin{eqnarray*}
\ddot{z}(t)=2x_0(t)z(t)+f(t),\quad z(0)=0\quad\dot{z}(0)=0
\end{eqnarray*}
which is equivalent to
\begin{eqnarray*}
\dot{z}(t)=2\int_0^tx_0(s)z(s)ds+\int_0^tf(s)ds,\quad z(0)=0.
\end{eqnarray*}
Therefore, since $t\mapsto x_1(t)$ is almost surely continuously differentiable with $\dot{x}_1(t)\in L^2(\Omega,\mathcal{F}^B,\mathbb{P})$ (here $\mathcal{F}^B:=\cup_{t\geq 0}\mathcal{F}_t^B\subseteq\mathcal{F}$), we get 
\begin{eqnarray*}
\mathbb{E}[\dot{x}_1(t)\text{Exp}(f)]&=&\frac{d}{dt}\mathbb{E}[x_1(t)\text{Exp}(f)]\\
&=&\frac{d}{dt}\int_0^t\mathcal{K}(t,s)f(s)ds\\
&=&2\int_0^tx_0(s)\left(\int_0^s\mathcal{K}(s,r)f(r)dr\right)ds+\int_0^tf(s)ds\\
&=&2\int_0^tx_0(s)\mathbb{E}[x_1(s)\text{Exp}(f)]ds+\mathbb{E}[B(t)\text{Exp}(f)]\\
&=&\mathbb{E}\left[\left(2\int_0^tx_0(s)x_1(s)ds+B(t)\right)\text{Exp}(f)\right].
\end{eqnarray*}
Since the set $\left\{\text{Exp}(f), f\in C([0,+\infty[)\cap L^2([0,+\infty) \right\}$ is total in $L^2(\Omega,\mathcal{F}^B,\mathbb{P})$ we conclude that
\begin{eqnarray*}
\dot{x}_1(t)=2\int_0^tx_0(s)x_1(s)ds+B(t)\quad\mbox{ almost surely}.
\end{eqnarray*}
The proof is now complete.
\end{proof}

\begin{example}
If we consider
\begin{eqnarray*} 
\ddot{x}_0(t) = x_0(t)^{2} -1/3,\quad  x_0(0)=0\quad \dot{x}_0(0) =0
\end{eqnarray*}
we see that
\begin{eqnarray*}
x_0(t) = - (6 - 3\emph{ \text{sn}}^{2}(t,1/2))/2.
\end{eqnarray*}
With such choice of the coefficients equation (\ref{x_1 bis}) becomes
\begin{equation}\label{eq:33}
\ddot{x}_1(t)= - (6 - 3\emph{ \text{sn}}^{2}(t,1/2))x_1(t)+\dot{B}(t),\quad x_1(0) =0\quad \dot{x}_1(0) =0.
\end{equation}
The picture below gives a clear idea of the oscillatory nature of the
solution process (\ref{eq:33}).

\begin{figure}[H]
  \centering
  \includegraphics[scale=0.7]{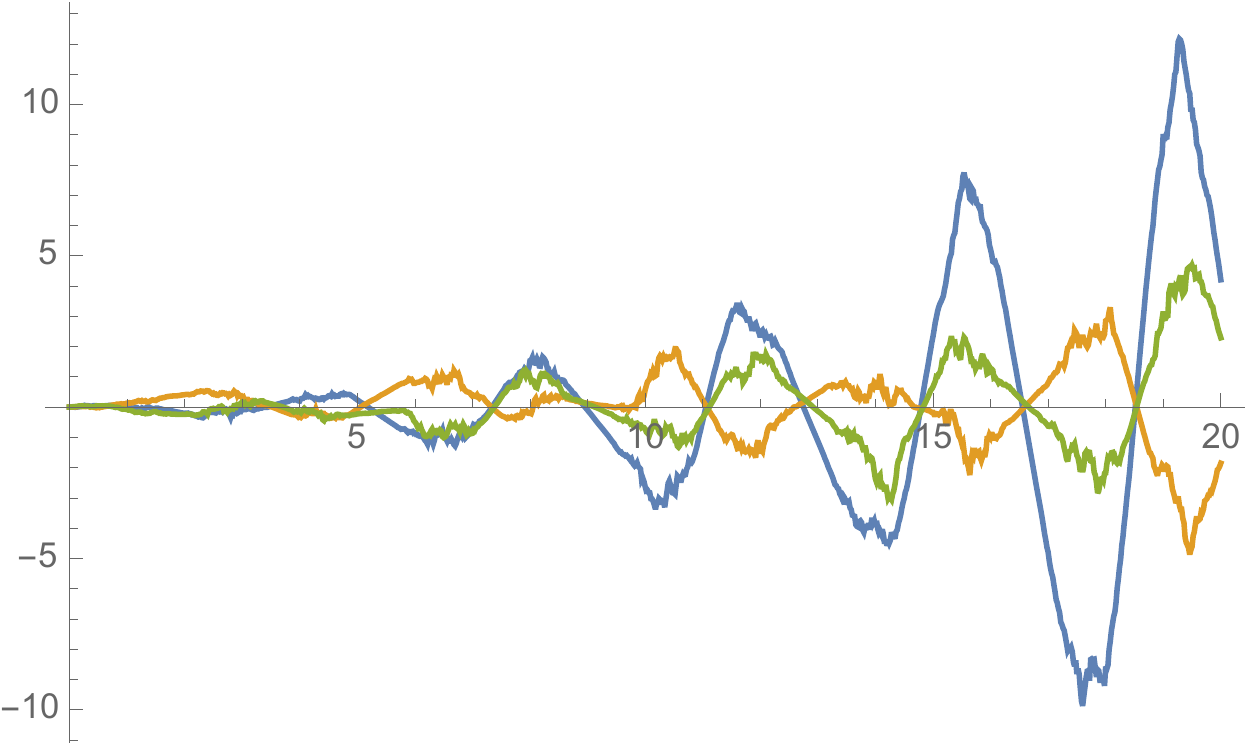}
  \caption{Graph of three paths of process (\ref{eq:33})}
\end{figure}
\end{example}

\subsection{The equations for $x_n$ with $n\geq 2$: Lam\'e's equations with random coefficients}

We now want to solve
\begin{eqnarray}\label{x_n bis}
\ddot{x}_n(t)=2x_0(t)x_n(t)+\sum_{j=1}^{n-1}x_j(t)x_{n-j}(t),\quad x_n(0)=0\quad \dot{x}_n(0)=0
\end{eqnarray} 
which is equivalent to the system of differential equations with random coefficients
\begin{equation*}
\begin{cases}
\dot{x}_n(t) = \xi_n(t), & x_n(0)=0\\
\dot{\xi}_n(t)= 2x_0(t)x_n(t)+\sum_{j=1}^{n-1}x_j(t)x_{n-j}(t), & \xi_n(0)=0
\end{cases}
\end{equation*}
with $ x_{0}(t) $ given by (\ref{eq:6-1}). We remark that the non homogeneous term $\sum_{j=1}^{n-1}x_j(t)x_{n-j}(t)$ depends on the random processes $\{x_m(t)\}_{t\geq 0}$ for $m<n$. Therefore, equation (\ref{x_n bis}) is described inductively by solving the linear equations associated to lower terms in the expansion (\ref{series}). We have the following.

\begin{theorem}\label{theorem x_n}
For every $n\geq 2$ equation (\ref{x_n bis})  has a unique global strong solution adapted to the Brownian filtration $\{\mathcal{F}^B_t\}_{t\geq 0}$. The solution can be explicitly represented as 
\begin{eqnarray*}
x_n(t)&=&w_2(t)\int_0^tw_1(s)\left(\sum_{j=1}^{n-1}x_j(s)x_{n-j}(s)\right)ds\\
&&-w_1(t)\int_0^tw_2(s)\left(\sum_{j=1}^{n-1}x_j(s)x_{n-j}(s)\right)ds
\end{eqnarray*}
or equivalently
\begin{eqnarray*}
x_n(t)&=&\int_0^t\mathcal{K}(t,s)\left(\sum_{j=1}^{n-1}x_j(s)x_{n-j}(s)\right)ds.
\end{eqnarray*}
\end{theorem}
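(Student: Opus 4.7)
I would prove Theorem 2.3 by induction on $n\geq 2$, the base case $n=1$ being supplied by Theorem \ref{theorem x_1}. The conceptual point is that, once $x_1,\dots,x_{n-1}$ have been constructed, the forcing term
\begin{eqnarray*}
f_n(t):=\sum_{j=1}^{n-1}x_j(t)\,x_{n-j}(t)
\end{eqnarray*}
is a \emph{given} real-valued process with continuous sample paths, adapted to $\{\mathcal{F}_t^B\}_{t\geq 0}$. Hence equation (\ref{x_n bis}) is no longer a genuine stochastic differential equation in the It\^o sense, but rather a linear second order ODE with (random but) pathwise continuous coefficients and forcing. Consequently the whole analysis can be done $\omega$-by-$\omega$ via variation of parameters, with no stochastic integral appearing in the representation.

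More precisely, the inductive hypothesis grants that each $x_j$, $1\leq j\leq n-1$, is a continuous $\mathcal{F}_t^B$-adapted process, so $f_n$ inherits the same properties. Using the fundamental pair $w_1,w_2$ of the homogeneous equation (\ref{eq:7}), normalized to have Wronskian identically equal to $1$, the variation of parameters formula suggests the candidate
\begin{eqnarray*}
x_n(t)=w_2(t)\int_0^t w_1(s)f_n(s)\,ds-w_1(t)\int_0^t w_2(s)f_n(s)\,ds=\int_0^t\mathcal{K}(t,s)f_n(s)\,ds,
\end{eqnarray*}
with $\mathcal{K}(t,s)=w_2(t)w_1(s)-w_1(t)w_2(s)$ as in (\ref{x_1 representation}). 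Since $s\mapsto f_n(s)$ is pathwise continuous and $w_1,w_2$ are smooth deterministic functions, the integrals are ordinary Riemann integrals defined pathwise; measurability and adaptedness of $x_n$ are then automatic.

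The verification that the candidate solves (\ref{x_n bis}) is a direct computation, considerably simpler than the Wiener-chaos argument used for $n=1$. First, $\mathcal{K}(t,t)=0$ and $\partial_1\mathcal{K}(t,t)=w_2'(t)w_1(t)-w_1'(t)w_2(t)=1$, which gives $x_n(0)=\dot{x}_n(0)=0$ by Leibniz differentiation under the integral sign. Differentiating once more and using that $w_1,w_2$ solve $\ddot{w}=2x_0 w$ yields
\begin{eqnarray*}
\ddot{x}_n(t)=\partial_1\mathcal{K}(t,t)f_n(t)+\int_0^t\partial_1^2\mathcal{K}(t,s)f_n(s)\,ds=f_n(t)+2x_0(t)\,x_n(t),
\end{eqnarray*}
which is exactly (\ref{x_n bis}). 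Uniqueness of the global strong solution follows immediately from the linearity of the equation: any two solutions differ by a solution of the homogeneous equation (\ref{eq:7}) with zero initial data, which is identically zero. Finally, continuity of $x_n$ closes the induction, so the argument propagates to all $n\geq 2$.

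The only genuine obstacle I foresee is a bookkeeping concern rather than a conceptual one: one must ensure that $f_n(t)$ remains well defined (finite, continuous) for all $t\geq 0$ so that the representation is global, not merely local. This is guaranteed by the inductive hypothesis together with the fact that the underlying equations are linear, so no explosion can occur in finite time; the pathwise bound $|x_n(t)|\leq \int_0^t|\mathcal{K}(t,s)|\,|f_n(s)|\,ds$ together with continuity of $f_n$ on compact intervals suffices.
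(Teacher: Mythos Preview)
Your proof is correct and is precisely the kind of ``straightforward modification'' the paper invokes for this theorem; in fact you have written out explicitly what the paper leaves implicit, namely that for $n\geq 2$ the forcing $f_n$ is a pathwise continuous adapted process, so the exponential-vector/$L^2$ verification used for $x_1$ can be replaced by ordinary variation of parameters applied $\omega$-by-$\omega$. The checks on initial data, the differentiation under the integral using $\mathcal{K}(t,t)=0$ and $\partial_1\mathcal{K}(t,t)=1$, the uniqueness via linearity, and the globality argument are all sound.
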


\begin{proof}
The proof is obtained via straightforward modifications of the proof of Theorem \ref{theorem x_1}.
\end{proof}

\section{Global behavior of the truncated expansion}

The next result provides some estimates on the random behavior of the functions $\{x_n(t)\}_{t\geq 0}$ for $n\geq 1$.  

\begin{proposition}\label{main theorem}
For any $T>0$ and $n\geq 1$ we have
\begin{eqnarray*}
&&\mathbb{P}\left(|x_n(t)|\leq\frac{\gamma_n(t)}{\sigma^n}\mbox{ for all $t\in [0,T]$}\right)\\
&&\geq 1-\sigma 2^{(n-2)^+}\sqrt{2/\pi}\left(\Vert w_1\Vert_{L^2([0,T])}+\Vert w_2\Vert_{L^2([0,T])}\right)
\end{eqnarray*} 
where $(n-2)^+:=\max\{n-2,0\}$ while $\{\gamma_n\}_{n\geq 1}$ is defined recursively as 
\begin{eqnarray}\label{Gamma_1}
\gamma_1(t):=|w_1(t)|+|w_2(t)|,\quad t\geq 0
\end{eqnarray}
and for $n\geq 2$,
\begin{eqnarray*}
\gamma_n(t):=\int_0^t|K(t,s)|\left(\sum_{j=1}^{n-1}\gamma_j(s)\gamma_{n-j}(s)\right)ds,\quad t\geq 0.
\end{eqnarray*}
\end{proposition}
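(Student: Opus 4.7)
The plan is to prove the proposition by induction on $n$, combining the explicit integral representations of Theorems \ref{theorem x_1} and \ref{theorem x_n} with Doob's weak $L^1$ maximal inequality applied to the two Wiener martingales driving $x_1$.

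For the base case $n=1$, Theorem \ref{theorem x_1} gives $x_1(t) = w_2(t) M_1(t) - w_1(t) M_2(t)$ with $M_i(t) := \int_0^t w_i(s)\,dB(s)$. Each $M_i$ is a continuous Gaussian martingale with $M_i(T) \sim N(0, \|w_i\|_{L^2([0,T])}^2)$, so $\mathbb{E}[|M_i(T)|] = \sqrt{2/\pi}\,\|w_i\|_{L^2([0,T])}$. Applying Doob's weak $L^1$ inequality to the non-negative submartingale $|M_i|$ yields
\begin{eqnarray*}
\mathbb{P}\Bigl(\sup_{0 \leq t \leq T} |M_i(t)| \geq 1/\sigma\Bigr) \leq \sigma\,\mathbb{E}[|M_i(T)|] = \sigma\sqrt{2/\pi}\,\|w_i\|_{L^2([0,T])},
\end{eqnarray*}
and on the intersection of the two complementary events one has $|x_1(t)| \leq (|w_1(t)|+|w_2(t)|)/\sigma = \gamma_1(t)/\sigma$ uniformly on $[0,T]$. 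A union bound closes the base case.

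For $n \geq 2$ I will define $E_j := \{|x_j(t)| \leq \gamma_j(t)/\sigma^j \text{ for all } t \in [0,T]\}$ and $p_j := \mathbb{P}(E_j^c)$. Using the representation in Theorem \ref{theorem x_n}, on the event $E_1 \cap \cdots \cap E_{n-1}$ one has the deterministic pathwise estimate
\begin{eqnarray*}
|x_n(t)| \leq \int_0^t |\mathcal{K}(t,s)| \sum_{j=1}^{n-1} \frac{\gamma_j(s)\gamma_{n-j}(s)}{\sigma^j\,\sigma^{n-j}}\, ds = \frac{\gamma_n(t)}{\sigma^n},
\end{eqnarray*}
so $E_1 \cap \cdots \cap E_{n-1} \subseteq E_n$ and hence $p_n \leq p_1 + \cdots + p_{n-1}$. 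Writing $C := \sigma\sqrt{2/\pi}(\|w_1\|_{L^2([0,T])} + \|w_2\|_{L^2([0,T])})$, the base case gives $p_1 \leq C$, and a short induction using the geometric identity $1 + \sum_{k=0}^{m-1} 2^k = 2^m$ yields $p_n \leq 2^{(n-2)^+}\,C$ for every $n \geq 1$, which is exactly the claimed bound.

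The only mildly delicate point will be invoking Doob's weak $L^1$ inequality (rather than Markov at the single endpoint $T$) in the base case: this is what upgrades the trivial estimate on $|M_i(T)|$ to a uniform-in-time bound on $\sup_{0 \leq t \leq T}|M_i(t)|$ without any loss in the $\sqrt{2/\pi}$ constant, and this sharpness is what makes the linear-in-$\sigma$ bound on the failure probability tight. Everything else is pathwise estimation against the deterministic kernel $\mathcal{K}$ and bookkeeping of the geometric recursion for $p_n$.
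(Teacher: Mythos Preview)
Your proposal is correct and follows essentially the same route as the paper's proof: the same induction scheme, the same base case via Doob's weak $L^1$ maximal inequality applied to the two Wiener martingales $\int_0^t w_i(s)\,dB(s)$ (producing the sharp constant $\sqrt{2/\pi}\,\|w_i\|_{L^2([0,T])}$), and the same pathwise estimate on $E_1\cap\cdots\cap E_{n-1}$ combined with a union bound and the geometric sum $\sum_{i=1}^{n-1}2^{(i-2)^+}=2^{n-2}$ for $n\geq 2$.
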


\begin{proof}
We will prove the theorem by induction dividing the proof in two steps.\\

\noindent\textbf{Step One: $n=1$}. We recall that
\begin{eqnarray*}
x_1(t)&=&\int_0^tK(t,s)dB(s)\\
&=&w_2(t)\int_0^tw_1(s)dB(s)+w_1(t)\int_0^tw_2(s)dB(s).
\end{eqnarray*}
We fix a positive constant $T$ and observe that
\begin{eqnarray*}
|x_1(t)|&\leq& |w_2(t)|\left|\int_0^tw_1(s)dB(s)\right|+|w_1(t)|\left|\int_0^tw_2(s)dB(s)\right|\\
&\leq& |w_2(t)|\sup_{t\in [0,T]}\left|\int_0^tw_1(s)dB(s)\right|+|w_1(t)|\sup_{t\in [0,T]}\left|\int_0^tw_2(s)dB(s)\right|.
\end{eqnarray*}
We now denote
\begin{eqnarray*}
A_1:=\left\{\omega\in\Omega: \sup_{t\in [0,T]}\left|\int_0^tw_1(s)dB(s)\right|\leq1/\sigma\right\}
\end{eqnarray*}
and
\begin{eqnarray*}
A_2:=\left\{\omega\in\Omega: \sup_{t\in [0,T]}\left|\int_0^tw_2(s)dB(s)\right|\leq1/\sigma\right\}.
\end{eqnarray*}
On the set $A_1\cap A_2$ the inequality
\begin{eqnarray*}
|x_1(t)|\leq \frac{|w_1(t)|+|w_2(t)|}{\sigma}\quad\mbox{ for all $t\in [0,T]$}
\end{eqnarray*}
holds true; we can therefore write recalling (\ref{Gamma_1}) that
\begin{eqnarray}\label{A}
\mathbb{P}\left(|x_1(t)|\leq\frac{\gamma_1(t)}{\sigma}\mbox{ for all $t\in [0,T]$}\right)&\geq&\mathbb{P}(A_1\cap A_2)\nonumber\\
&=&1-\mathbb{P}\left(A_1^c\cup A_2^c\right)\nonumber\\
&\geq&1-\mathbb{P}\left(A_1^c\right)-\mathbb{P}\left(A_2^c\right).
\end{eqnarray}
Now, according to Doob's maximal inequality for $i=1,2$ we  have
\begin{eqnarray}\label{B}
\mathbb{P}\left(A_i^c\right)&=&\mathbb{P}\left(\sup_{t\in [0,T]}\left|\int_0^tw_i(s)dB(s)\right|>1/\sigma\right)\nonumber\\
&\leq&\sigma\mathbb{E}\left[\left|\int_0^Tw_i(s)dB(s)\right|\right]\nonumber\\
&=&\sigma\sqrt{2/\pi}\Vert w_i\Vert_{L^2([0,T])}.
\end{eqnarray}
Hence, combining the upper bound (\ref{B}) with the lower bound (\ref{A}) we conclude that
\begin{eqnarray*}
\mathbb{P}\left(|x_1(t)|\leq\frac{\gamma_1(t)}{\sigma}\mbox{ for all $t\in [0,T]$}\right)\geq 1-\sigma\sqrt{2/\pi}\left(\Vert w_1\Vert_{L^2([0,T])}+\Vert w_2\Vert_{L^2([0,T])}\right).
\end{eqnarray*}

\noindent\textbf{Step two: $n\geq 2$}. We now assume the statement to be true for any $i\leq n-1$ and prove it for $i=n$. According to the representation 
\begin{eqnarray*}
x_n(t)=\int_0^tK(t,s)\left(\sum_{j=1}^{n-1}x_j(s)x_{n-j}(s)\right)ds,\quad t\geq 0
\end{eqnarray*}
we can bound $|x_n(t)|$ as follows
\begin{eqnarray*}
|x_n(t)|\leq\int_0^t|K(t,s)|\left(\sum_{j=1}^{n-1}|x_j(s)||x_{n-j}(s)|\right)ds.
\end{eqnarray*}
We now denote for $i\leq n-1$
\begin{eqnarray*}
A_i:=\left\{\omega\in\Omega: |x_i(t)|\leq\frac{\gamma_i(t)}{\sigma^i}\mbox{ for all $t\in [0,T]$}\right\}
\end{eqnarray*}
and observe that according to the inductive hypothesis
\begin{eqnarray*}
\mathbb{P}(A_i)\geq 1-\sigma 2^{(i-2)^+}\sqrt{2/\pi}\left(\Vert w_1\Vert_{L^2([0,T])}+\Vert w_2\Vert_{L^2([0,T])}\right)
\end{eqnarray*}
or equivalently,
\begin{eqnarray}\label{complement}
\mathbb{P}(A_i^c)\leq \sigma 2^{(i-2)^+}\sqrt{2/\pi}\left(\Vert w_1\Vert_{L^2([0,T])}+\Vert w_2\Vert_{L^2([0,T])}\right).
\end{eqnarray}
We also note that on the set $A_1\cap\cdot\cdot\cdot\cap A_{n-1}$ we have
\begin{eqnarray*}
|x_n(t)|&\leq&\int_0^t|K(t,s)|\left(\sum_{j=1}^{n-1}|x_j(s)||x_{n-j}(s)|\right)ds\\
&\leq&\int_0^t|K(t,s)|\left(\sum_{j=1}^{n-1}\frac{\gamma_j(s)}{\sigma^j}\frac{\gamma_{n-j}(s)}{\sigma^{n-j}}\right)ds\\
&=&\frac{\gamma_n(t)}{\sigma^n}
\end{eqnarray*}
for all $t\in [0,T]$. Therefore,
\begin{eqnarray*}
&&\mathbb{P}\left(|x_n(t)|\leq\frac{\gamma_n(t)}{\sigma^n}\mbox{ for all $t\in [0,T]$}\right)\\
&&\geq\mathbb{P}\left(A_1\cap\cdot\cdot\cdot\cap A_{n-1}\right)\\
&&=1-\mathbb{P}\left(A_1^c\cup\cdot\cdot\cdot\cup A_{n-1}^c\right)\\
&&\geq 1-\mathbb{P}\left(A_1^c\right)-\cdot\cdot\cdot-\mathbb{P}\left(A_{n-1}^c\right)\\
&&=1-\sum_{i=1}^{n-1}\mathbb{P}(A_i^c)\\
&&\geq1-\sigma\sqrt{2/\pi}\left(\Vert w_1\Vert_{L^2([0,T])}+\Vert w_2\Vert_{L^2([0,T])}\right)\sum_{i=1}^{n-1} 2^{(i-2)^+}\\
&&=1-\sigma 2^{(n-2)^+}\sqrt{2/\pi}\left(\Vert w_1\Vert_{L^2([0,T])}+\Vert w_2\Vert_{L^2([0,T])}\right)
\end{eqnarray*}
where in the last inequality we utilized the bound (\ref{complement}). The proof is complete. 
\end{proof}

We now prove a lower bound for the probability that the $n$-th order approximation of the \emph{virtual} solution process $\{x(t)\}_{t\geq 0}$ stays close to the solution of the deterministic equation during a given time interval $[0,T]$.
\begin{theorem}\label{corollary}
For $ n\geq 1$ we let
\begin{eqnarray*}
X^{\sigma}_n(t):=x_0(t)+\sigma x_1(t)+\cdot\cdot\cdot+\sigma^n x_n(t),\quad t\geq 0.
\end{eqnarray*}
Then, for any $T>0$ we have
\begin{eqnarray*}
&&\mathbb{P}\left(|X^{\sigma}_n(t)-x_0(t)|\leq\Gamma_n(t)\mbox{ for all $t\in [0,T]$}\right)\\
&&\geq 1-\sigma 2^{(n-1)^+}\sqrt{2/\pi}\left(\Vert w_1\Vert_{L^2([0,T])}+\Vert w_2\Vert_{L^2([0,T])}\right)
\end{eqnarray*}
where
\begin{eqnarray*}
\Gamma_n(t):=\sum_{i=1}^n\gamma_i(t),\quad t\geq 0
\end{eqnarray*}
and $\{\gamma_n\}_{n\geq 1}$ is the sequence of functions defined in Theorem \ref{main theorem}.
\end{theorem}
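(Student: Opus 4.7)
The plan is to reduce the statement directly to Proposition \ref{main theorem} via the triangle inequality and a union bound, with the only combinatorial work being the evaluation of a geometric sum of the weights $2^{(i-2)^+}$.

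First I would observe that $X^\sigma_n(t)-x_0(t)=\sum_{i=1}^n\sigma^i x_i(t)$ and apply the triangle inequality to obtain
\[
|X^\sigma_n(t)-x_0(t)|\leq\sum_{i=1}^n\sigma^i|x_i(t)|.
\]
For $i=1,\dots,n$, I then introduce the events
\[
A_i:=\left\{\omega\in\Omega:\ |x_i(t)|\leq\gamma_i(t)/\sigma^i\ \text{for all}\ t\in[0,T]\right\},
\]
which are exactly the events whose probabilities are estimated in Proposition \ref{main theorem}. On $A_1\cap\cdots\cap A_n$ each summand $\sigma^i|x_i(t)|$ is dominated by $\gamma_i(t)$ uniformly in $t\in[0,T]$, so
\[
|X^\sigma_n(t)-x_0(t)|\leq\sum_{i=1}^n\gamma_i(t)=\Gamma_n(t),\qquad t\in[0,T].
\]
Hence the probability in the statement is at least $\mathbb{P}(A_1\cap\cdots\cap A_n)$, and by subadditivity
\[
\mathbb{P}(A_1\cap\cdots\cap A_n)\geq 1-\sum_{i=1}^n\mathbb{P}(A_i^c).
\]

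Plugging in the bound $\mathbb{P}(A_i^c)\leq\sigma 2^{(i-2)^+}\sqrt{2/\pi}\bigl(\Vert w_1\Vert_{L^2([0,T])}+\Vert w_2\Vert_{L^2([0,T])}\bigr)$ from Proposition \ref{main theorem}, what remains is the elementary identity
\[
\sum_{i=1}^n 2^{(i-2)^+}=2^{(n-1)^+},
\]
which is trivial for $n=1$ (both sides equal $1$) and follows for $n\geq 2$ from $\sum_{i=1}^n 2^{(i-2)^+}=1+\sum_{i=2}^n 2^{i-2}=1+(2^{n-1}-1)=2^{n-1}$. Feeding this back gives exactly the advertised lower bound.

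There is no genuine obstacle here: the argument is a routine combination of the deterministic triangle inequality with Boole's inequality, Proposition \ref{main theorem} doing all the probabilistic work. The only point deserving attention is that the sequence of per-term weights $2^{(i-2)^+}$ has been engineered in Proposition \ref{main theorem} precisely so that its partial sums collapse to $2^{(n-1)^+}$, which is what produces the clean constant in the final estimate.
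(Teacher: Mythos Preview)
Your proposal is correct and follows essentially the same approach as the paper: introduce the events $A_i$, bound $|X^\sigma_n(t)-x_0(t)|$ by $\Gamma_n(t)$ on $A_1\cap\cdots\cap A_n$ via the triangle inequality, apply the union bound, and invoke Proposition~\ref{main theorem} together with the identity $\sum_{i=1}^n 2^{(i-2)^+}=2^{(n-1)^+}$. Your explicit verification of this last identity is slightly more detailed than what the paper writes, but otherwise the arguments coincide.
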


\begin{proof}
We proceed as before. For $i\leq n$ we introduce the events
\begin{eqnarray*}
A_i:=\left\{\omega\in\Omega: |x_i(t)|\leq\frac{\gamma_i(t)}{\sigma^i}\mbox{ for all $t\in [0,T]$}\right\}
\end{eqnarray*}
and observe that according to Theorem \ref{main theorem} we have
\begin{eqnarray*}
\mathbb{P}(A_i^c)\leq \sigma 2^{(i-2)^+}\sqrt{2/\pi}\left(\Vert w_1\Vert_{L^2([0,T])}+\Vert w_2\Vert_{L^2([0,T])}\right).
\end{eqnarray*}
We also note that on the set $A_1\cap\cdot\cdot\cdot\cap A_{n}$ we have
\begin{eqnarray*}
|X^{\sigma}_n(t)-x_0(t)|&\leq&\sum_{i=1}^n\sigma^i|x_i(t)|\\
&\leq&\sum_{i=1}^n\gamma_i(t)\\
&=&\Gamma_n(t)
\end{eqnarray*}
for all $t\in [0,T]$. Therefore,
\begin{eqnarray*}
&&\mathbb{P}\left(|X^{\sigma}_n(t)-x_0(t)|\leq \Gamma_n(t)\mbox{ for all $t\in [0,T]$}\right)\\
&&\geq\mathbb{P}\left(A_1\cap\cdot\cdot\cdot\cap A_{n}\right)\\
&&=1-\mathbb{P}\left(A_1^c\cup\cdot\cdot\cdot\cup A_{n}^c\right)\\
&&\geq1-\mathbb{P}\left(A_1^c\right)-\cdot\cdot\cdot-\mathbb{P}\left(A_{n}^c\right)\\
&&=1-\sum_{i=1}^{n}\mathbb{P}(A_i^c)\\
&&\geq 1-\sigma\sqrt{2/\pi}\left(\Vert w_1\Vert_{L^2([0,T])}+\Vert w_2\Vert_{L^2([0,T])}\right)\sum_{i=1}^{n} 2^{(i-2)^+}\\
&&=1-\sigma 2^{(n-1)^+}\sqrt{2/\pi}\left(\Vert w_1\Vert_{L^2([0,T])}+\Vert w_2\Vert_{L^2([0,T])}\right).
\end{eqnarray*}
\end{proof}

\section{Stochastic equation driven by a \emph{bounded} martingale}

In this section we consider the second order differential equation 
\begin{eqnarray}\label{SDE bounded}
\ddot{\chi}(t)=\chi(t)^2-\mathcal{B}+\sigma\dot{Z}_t,\quad \chi(0)=y\quad \dot{\chi}(0)=\eta
\end{eqnarray} 
where now $\{Z_t\}_{t\geq 0}$ is a bounded continuous martingale starting at zero. The next theorem shows that in this case the series
\begin{eqnarray}\label{series bounded}
\chi(t):=\chi_0(t)+\sum_{n\geq 1}\sigma^n\chi_n(t)
\end{eqnarray}
converges almost surely for all $t$ in a suitably small interval. We observe that $\chi_0(t)=x_0(t)$ for all $t\geq 0$ since the equation they solve is not affected by choice of the noise.

\begin{theorem}
Let $\{Z_t\}_{t\geq 0}$ satisfy for all $t\geq 0$ the condition $Z(t)\in L^{\infty}(\Omega,\mathcal{F},\mathbb{P})$. Then, there exists $T_{\sigma}>0$ such that the series (\ref{series bounded}) converges almost surely for any $t\in [0,T_{\sigma}]$. More precisely, the uniform bound
\begin{eqnarray}\label{second}
\sup_{t\in [0,T_{\sigma}]}|x(t)-x_0(t)|\leq \frac{1}{2\mathcal{N}(T_{\sigma})}\quad\mbox{ almost surely}
\end{eqnarray}
holds with 
\begin{eqnarray*}
\mathcal{N}(T):=\Vert w_2\Vert_{L^{\infty}([0,T])}\Vert w_1\Vert_{L^{1}([0,T])}+\Vert w_1\Vert_{L^{\infty}([0,T])}\Vert w_2\Vert_{L^{1}([0,T])}.
\end{eqnarray*}
\end{theorem}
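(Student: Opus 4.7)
The plan is to reduce the stochastic problem to a deterministic one by producing pathwise almost-sure bounds $\sup_{t \in [0,T]}|\chi_n(t)| \leq a_n(T)$ inductively in $n$, and then to recognise that the resulting scalar recursion is of Catalan type, so that $\sum_n a_n(T) \sigma^n$ can be summed in closed form via a generating function and $T_\sigma$ selected at the boundary of its convergence interval. For the base case $n=1$, the representation $\chi_1(t) = \int_0^t \mathcal{K}(t,s)\, dZ(s)$ carries over from Theorem \ref{theorem x_1} essentially unchanged, the Brownian integrator being replaced by the continuous bounded semimartingale $Z$. Since $\mathcal{K}(t,\cdot)$ is smooth with $\mathcal{K}(t,t)=0$ and $Z(0)=0$, integration by parts (via It\^o's formula applied to $s\mapsto \mathcal{K}(t,s)Z(s)$) turns the stochastic integral into a pathwise Lebesgue one,
\begin{equation*}
\chi_1(t) = -\int_0^t \partial_2 \mathcal{K}(t,s)\, Z(s)\, ds = -\int_0^t \big(w_2(t) w_1'(s) - w_1(t) w_2'(s)\big)\, Z(s)\, ds,
\end{equation*}
so that, setting $M := \sup_{s\leq T}\Vert Z(s)\Vert_{L^\infty(\mathbb{P})}$ (finite under the hypothesis), the triangle inequality yields the deterministic almost-sure majorisation
\begin{equation*}
|\chi_1(t)| \leq M\Big( \Vert w_2\Vert_{L^{\infty}([0,T])}\Vert w_1'\Vert_{L^{1}([0,T])} + \Vert w_1\Vert_{L^{\infty}([0,T])}\Vert w_2'\Vert_{L^{1}([0,T])}\Big) =: a_1(T).
\end{equation*}

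For $n\geq 2$, Theorem \ref{theorem x_n} combined with the straightforward bound $\int_0^t|\mathcal{K}(t,s)|\, ds \leq \mathcal{N}(T)$ on $[0,T]$ (itself immediate from $|\mathcal{K}(t,s)|\leq |w_2(t)||w_1(s)|+|w_1(t)||w_2(s)|$) and the inductive hypothesis deliver
\begin{equation*}
|\chi_n(t)| \leq \mathcal{N}(T)\sum_{j=1}^{n-1} a_j(T)\, a_{n-j}(T) =: a_n(T), \qquad t\in[0,T].
\end{equation*}
The generating function $F(\sigma,T):=\sum_{n\geq 1} a_n(T)\sigma^n$ then satisfies the quadratic functional equation $F = a_1(T)\sigma + \mathcal{N}(T)F^2$, whose branch vanishing at $\sigma=0$ is
\begin{equation*}
F(\sigma,T) = \frac{1 - \sqrt{1 - 4 a_1(T)\mathcal{N}(T)\sigma}}{2\mathcal{N}(T)},
\end{equation*}
which is real and uniformly majorised by $\frac{1}{2\mathcal{N}(T)}$ exactly on $\{\sigma>0 : 4 a_1(T)\mathcal{N}(T)\sigma \leq 1\}$, attaining this upper bound at the endpoint.

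To close the argument I observe that $T\mapsto a_1(T)\mathcal{N}(T)$ is continuous, vanishes at $T=0$ and is unbounded in $T$, so for every sufficiently small $\sigma>0$ there exists a unique $T_\sigma>0$ solving $4 a_1(T_\sigma)\mathcal{N}(T_\sigma)\sigma = 1$. For this choice, the pointwise domination $\sum_{n\geq 1}\sigma^n|\chi_n(t)| \leq F(\sigma,T_\sigma) = \frac{1}{2\mathcal{N}(T_\sigma)}$ holds almost surely for every $t\in[0,T_\sigma]$, and gives simultaneously the almost-sure uniform convergence of (\ref{series bounded}) on $[0,T_\sigma]$ and the stated bound on $|x(t)-x_0(t)|$. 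The main obstacle is the pathwise bound on $\chi_1$: the $L^\infty$-hypothesis on $Z$ is used crucially to perform the integration by parts and collapse the stochastic integral into a Lebesgue integral with a uniformly bounded random factor; any weaker integrability assumption would at best yield $L^p$ moment estimates, insufficient for the pathwise series majorisation that drives the rest of the proof.
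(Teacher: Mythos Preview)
Your proof is correct and follows essentially the same route as the paper's: integration by parts to replace $\int_0^t\mathcal{K}(t,s)\,dZ(s)$ by a pathwise Lebesgue integral against the bounded $Z$, an inductive deterministic bound on $|\chi_n|$ through the kernel estimate $\int_0^t|\mathcal{K}(t,s)|\,ds\le\mathcal{N}(T)$, and then summing the resulting Catalan-type recursion to obtain $(1-\sqrt{1-4a_1\mathcal{N}\sigma})/(2\mathcal{N})$ with $T_\sigma$ chosen at the boundary of the radicand. The paper names the Catalan numbers explicitly and writes the bound as $|\chi_n(t)|\le c_n\mathcal{M}(T)^n\mathcal{N}(T)^{n-1}$, whereas you package the same information in the generating-function equation $F=a_1\sigma+\mathcal{N}F^2$; these are equivalent presentations. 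Two minor remarks: your qualifier ``for every sufficiently small $\sigma>0$'' is unnecessary, since $T\mapsto a_1(T)\mathcal{N}(T)$ is continuous, zero at $T=0$ and unbounded, so a solution $T_\sigma$ exists for every $\sigma>0$; and your placement of the derivatives in $a_1(T)$ (namely $\Vert w_i\Vert_{L^\infty}\Vert w_j'\Vert_{L^1}$) is the correct one coming from $\partial_2\mathcal{K}(t,s)=w_2(t)w_1'(s)-w_1(t)w_2'(s)$---the paper's displayed $\mathcal{M}(t)$ has the derivative on the wrong factor, an evident typo.
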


\begin{proof}
We start as before observing that
\begin{eqnarray*}
\chi_1(t)&=&\int_0^t\mathcal{K}(t,s)dZ(s)\\
&=&-\int_0^t\partial_2\mathcal{K}(t,s)Z(s)ds
\end{eqnarray*}
(recall the kernel $\mathcal{K}$ defined in (\ref{x_1 representation})). This implies
\begin{eqnarray*}
|\chi_1(t)|&\leq&\int_0^t|\partial_2\mathcal{K}(t,s)||Z(s)|ds\\
&=&\sup_{s\in [0,t]}\Vert Z(s)\Vert_{L^{\infty}(\Omega)}\int_0^t|\partial_2\mathcal{K}(t,s)|ds\\
&=&\sup_{s\in [0,t]}\Vert Z(s)\Vert_{L^{\infty}(\Omega)}\left(|\dot{w}_2(t)|\int_0^t|w_1(s)|ds+|\dot{w}_1(t)|\int_0^t|w_2(s)|ds\right)\\
&\leq&\sup_{s\in [0,t]}\Vert Z(s)\Vert_{L^{\infty}(\Omega)}\left(\Vert \dot{w}_2\Vert_{L^{\infty}([0,t])}\Vert w_1\Vert_{L^{1}([0,t])}+\Vert \dot{w}_1\Vert_{L^{\infty}([0,t])}\Vert w_2\Vert_{L^{1}([0,t])}\right)\\
&=&\mathcal{M}(t)
\end{eqnarray*}
where to ease the notation we set
\begin{eqnarray*}
\mathcal{M}(t):=\sup_{s\in [0,t]}\Vert Z(s)\Vert_{L^{\infty}(\Omega)}\left(\Vert \dot{w}_2\Vert_{L^{\infty}([0,t])}\Vert w_1\Vert_{L^{1}([0,t])}+\Vert \dot{w}_1\Vert_{L^{\infty}([0,t])}\Vert w_2\Vert_{L^{1}([0,t])}\right).
\end{eqnarray*}
The first step is to prove by induction that for any fixed $T>0$ we have
\begin{eqnarray}\label{induction 1}
|\chi_n(t)|\leq c_n\mathcal{M}(T)^n\mathcal{N}(T)^{n-1}\quad\mbox{ almost surely for all $n\geq 1$ and $t\in [0,T]$}
\end{eqnarray}
where $\{c_n\}_{n\geq 1}$ denotes the sequence of \emph{Catalan numbers} which are defined recursively as
\begin{eqnarray*}
c_1=1,\quad c_2=1,\quad c_n:=\sum_{j=1}^{n-1}c_jc_{n-j}.
\end{eqnarray*}
Inequality (\ref{induction 1}) is trivially true for $n=1$ (note that the function $t\mapsto \mathcal{M}(t)$ is increasing). We now assume the property to be true for all $j\leq n-1$; then, 
\begin{eqnarray*}
|\chi_n(t)|&=&\left|\int_0^t\mathcal{K}(t,s)\left(\sum_{j=1}^{n-1}\chi_j(s)\chi_{n-j}(s)\right)ds\right|\\
&\leq&\int_0^t|\mathcal{K}(t,s)|\left(\sum_{j=1}^{n-1}|\chi_j(s)| |\chi_{n-j}(s)|\right)ds\\
&\leq&\int_0^t|\mathcal{K}(t,s)|\left(\sum_{j=1}^{n-1} c_j\mathcal{M}(T)^j \mathcal{N}(T)^{j-1} c_{n-j}\mathcal{M}(T)^{n-j} \mathcal{N}(T)^{n-j-1}\right)ds\\
&=&\mathcal{M}(T)^n\mathcal{N}(T)^{n-2}\int_0^t|\mathcal{K}(t,s)|\left(\sum_{j=1}^{n-1} c_jc_{n-j}\right)ds\\
&\leq&c_n\mathcal{M}(T)^n\mathcal{N}(T)^{n-1}
\end{eqnarray*}
completing the proof of (\ref{induction 1}). Therefore, recalling that
\begin{eqnarray*}
\sum_{n\geq 1}c_ny^n=\frac{1-\sqrt{1-4y}}{2},\quad y\in [0,1/4]
\end{eqnarray*}
we can write for $t\in [0,T]$ that
\begin{eqnarray}\label{q}
|\chi(t)-\chi_0(t)|&\leq&\sum_{n\geq 1}\sigma^n|\chi_n(t)|\nonumber\\
&\leq&\sum_{n\geq 1}\sigma^nc_n\mathcal{M}(T)^n \mathcal{N}(T)^{n-1}\nonumber\\
&=&\frac{1}{\mathcal{N}(T)}\sum_{n\geq 1}c_n\sigma^n\mathcal{M}(T)^n\mathcal{N}(T)^{n}\nonumber\\
&=&\frac{1-\sqrt{1-4\sigma \mathcal{M}(T)\mathcal{N}(T)}}{2\mathcal{N}(T)}
\end{eqnarray}
provided that
\begin{eqnarray*}
\mathcal{M}(T)\mathcal{N}(T)\leq\frac{1}{4\sigma}.
\end{eqnarray*}
Hence, since $\mathcal{M}(0)=\mathcal{N}(0)=0$ and the functions $T\mapsto \mathcal{M}(T)$ and $T\mapsto \mathcal{N}(T)$ are increasing, continuous and unbounded (by the properties of $w_1$ and $w_2$), we deduce that the equation
\begin{eqnarray*}
\mathcal{M}(T)\mathcal{N}(T)= \frac{1}{4\sigma}
\end{eqnarray*}
has a unique solution $T_{\sigma}>0$ and that $\mathcal{M}(T)\mathcal{N}(T)\leq\frac{1}{4\sigma}$ for all $T\leq T_{\sigma}$. Therefore, choosing $T=T_{\sigma}$ in (\ref{q}) we obtain
\begin{eqnarray*}
\sup_{t\in [0,T_{\sigma}]}|\chi(t)-\chi_0(t)|\leq \frac{1}{2\mathcal{N}(T_{\sigma})}\quad\mbox{ almost surely}.
\end{eqnarray*}
\end{proof}

\begin{example}
We may choose
\begin{eqnarray*}
Z(t):=\sin(B(t))e^{t},\quad t\geq 0
\end{eqnarray*} 
where $\{B(t)\}_{t\geq 0}$ is a one dimensional standard Brownian motion. The process $\{Z(t)\}_{t\geq 0}$ is clearly continuous and starts at zero; moreover, using the It\^o formula one verifies immediately the martingale property. In this case we have
\begin{eqnarray*}
\sup_{s\in [0,t]}\Vert Z(s)\Vert_{L^{\infty}(\Omega)}=e^t,\quad t\geq 0.
\end{eqnarray*}
\end{example}

\appendix

\section{Appendix}\label{appendix}
Here we collect very briefly a number of elementary identities and formulas for some of the special functions needed. In particular, the function defined in (\ref{eq:13}) and the second solution of the deterministic Lam\'e equation relies on
the Jacobi Epsilon function, which is detailed below. Of course a very comprehensive collection of results for Jacobian Elliptic Functions is contained in the website \cite{SF}.
\begin{align*}
\text{sn}^{2}(x,q) + \text{cn}^{2}(x,q) &=1\\
\text{dn}^{2}(x,q) + q^{2}\text{sn}^{2}(x,q)& =1\\
\text{dn}^{2}(x,q) -q^{2} \text{cn}^{2}(x,q) &=1 -q^{2} =q'^{2}\\
\text{am}(x,q) &= \int_{0}^{x}\text{dn}(t,q)dt\\
\text{sn}(x,q)& = \sin(\text{am}(x,q))\\
E(\phi,q) &= \int_{0}^{\phi}\sqrt{1 - q^{2}\sin^{2}\theta}d\theta.
\end{align*}
Jacobi's Epsilon function:
\begin{equation*}
 \mathcal{E}(x,q)= \int_{0}^{x}\text{dn}^{2}(t,q)dt\left(=E(\text{am}(x,q),q) ~  \text{ for }-K\leq x \leq K \right).
\end{equation*}
Asymptotics of Jacobi's Epsilon function and relation to Theta functions:
\begin{equation}\label{eq:21}
\mathcal{E}(x,q) = \frac{E(q)}{K(q)}x + \frac{\frac{d\theta_{4}}{d\xi}((\xi,p))}{\theta^{2}_{3}(0,p)\theta_{4}(\xi,p)}~,
\end{equation}
with $ \xi = x/\theta^{2}_{3}(0,p) $ and $\displaystyle p =\exp\{-\pi K'(q)/K(q)\} $, $ K'(q) = K(q') $, $ q'^{2} + q^{2} =1
$. The logarithmic derivative in (\ref{eq:21}) can be expressed as:
\begin{equation*}
\frac{\frac{d\theta_4}{dz}(z,p)}{\theta_4(z,p)} = 4\sum_{n=1}^{\infty}\frac{p^{n}}{1
  - p^{2n}}\sin(2nz)
\end{equation*}

\end{document}